\newcommand{\Acal}{\mathcal{A}}
\newcommand{\Fcal}{\mathcal{F}}
\newcommand{\Hcal}{\mathcal{H}}
\newcommand{\Ocal}{\mathcal{O}}
\newcommand{\Ucal}{\mathcal{U}}
\newcommand{\Xcal}{\mathcal{X}}
\newcommand{\ch}{\mathbf{1}}
\newcommand{\Z}{\mathbb{Z}}
\newcommand{\C}{\mathbb{C}}
\newcommand{\N}{\mathbb{N}}
\newcommand{\T}{\mathbb{T}}
\newcommand{\E}{\mathbb{E}}
\newcommand{\OC}{\bar{\mathcal{O}}}
\newcommand{\Ga}{\Gamma}
\newcommand{\ga}{\gamma}
\newcommand{\del}{\delta}
\newcommand{\ep}{\epsilon}
\newcommand{\sig}{\sigma}
\newcommand{\la}{\lambda}
\newcommand{\La}{\Lambda}
\newcommand{\Om}{\Omega}
\newcommand{\ol}{\overline}
\newcommand{\br}{\vspace{3 mm}}
\newcommand{\imp}{\Rightarrow}
\newcommand{\rest}{\upharpoonright}
\newcommand{\cls}{{\rm{cls\,}}}
\newcommand{\Aut}{{\rm{Aut\,}}}
\newcommand{\eva}{{\rm{eva}}}
\newcommand{\spann}{{\rm{span}}}
\newcommand{\Iso}{{\rm{Iso\,}}}
\newcommand{\Homeo}{{\rm{Homeo\,}}}
\newcommand{\onto}{\twoheadrightarrow}
\swapnumbers \theoremstyle{plain}
\newtheorem{thm}{Theorem}[section]
\newtheorem{cor}[thm]{Corollary}
\newtheorem{lem}[thm]{Lemma}
\newtheorem{prop}[thm]{Proposition}
\theoremstyle{definition}
\newtheorem{defns}[thm]{Definitions}
\newtheorem{prob}[thm]{Problem}
\numberwithin{equation}{section}
\begin{document}
\title[On Hilbert dynamical systems]
{On Hilbert dynamical systems}

\author{Eli Glasner and Benjamin Weiss}

\address{Department of Mathematics\\
     Tel Aviv University\\
         Tel Aviv\\
         Israel}
\email{glasner@math.tau.ac.il}

\address {Institute of Mathematics\\
 Hebrew University of Jerusalem\\
Jerusalem\\
 Israel}
\email{weiss@math.huji.ac.il}

\begin{abstract}
Returning to a classical question in Harmonic Analysis 
we strengthen an old result
of Walter Rudin. We show that there exists a weakly almost periodic function
on the group of integers $\Z$ which is not in the norm-closure
of the algebra $B(\Z)$ of Fourier-Stieltjes transforms of measures on the dual
group $\hat{\Z}=\T$, and which is {\bf recurrent}. 
We also show that there is a Polish monothetic group which is reflexively but 
not Hilbert representable.
\end{abstract}

\thanks{{Both author's work is supported by ISF grant 
\#1157/08 and the first author's work is also supported
by BSF grant \# 2006119}}

\date{February 10, 2010}

\keywords{}

\maketitle

\tableofcontents

\section*{Introduction}

Walter Rudin \cite{Ru} was the first to show that the algebra $WAP(\Z)$, 
of weakly almost periodic functions on $\Z$, is strictly larger than 
the algebra $H(\Z)= \ol{B(\Z)}$, the norm closure in the Banach space 
$\ell_\infty(\Z)$ of the algebra $B(\Z)$ of Fourier-Stieltjes transforms of 
complex measures on the dual group $\hat{\Z} = \T$. 
Many other examples of functions in $WAP(\Z) \setminus H(\Z)$ followed 
(see e.g. \cite{DR}).

As it turns out, in all of these examples the function in question is non-recurrent.
We say that a function $f \in \ell_\infty(\Z)$ is {\em recurrent} if there is
a sequence $n_k \nearrow \infty$ with 
$\lim_{k \to \infty}\sup_{j \in \Z} |f(j) - f(j +n_k)| =0$.
Now in some sense the more interesting functions in $WAP(\Z)$ are the 
recurrent ones, and moreover they are prevalent; e.g.
every $\Z$-subalgebra of $WAP(\Z)$ which is $\Z$-generated by a recurrent
function consists entirely of recurrent functions
(see Lemma \ref{Rec-WAP} below). Also, a Fourier-Stieltjes transform
$\hat{\mu}$ of a continuous measure $\mu$ on $\T$ is recurrent iff
$\mu$ is Dirichlet \cite{HMP}.
Thus the question
whether there are {\em recurrent} functions in $WAP(\Z) \setminus H(\Z)$
is a natural one.

Another related open question 
is the following. Is there a Polish monothetic group $P$ which
can be represented as a group of isometries of a reflexive Banach space
but is not representable as a group of unitary operators on a Hilbert space?

In this work we see how these questions are related
and show that the answer to both is affirmative
(Theorem \ref{WAPH-1} and Corollary \ref{rf}). 
We also show that if a Polish monothetic group $P$ is Hilbert-representable
and $K \subset P$ is a compact subgroup, then
the quotient group $P/K$ is also Hilbert-representable 
(Corollary \ref{AK-ext-cor}).

Our proofs are based on the theory of topological dynamics and rely on
a well known construction of Banaszczyk \cite{Ba}. We also use
an idea of Megrelishvili \cite{Me-00} who showed that
the topological groups $L_{2n}([0,1])$, for $n$ a positive integer, are
reflexively but not Hilbert representable.
For the sake of simplicity we state our results in the most
basic setup, where the acting group is $\Z$ and the dynamical
systems are usually assumed to be point-transitive.

We refer the reader to the following related recent works:
Gao and Pestov \cite{Gao-P}, Megrelishvili \cite{Me},
Ferri and Galindo \cite{Ga-F}, and Galindo \cite{Ga}.

\section{Some preliminaries from topological dynamics}

A {\em dynamical system} (or sometimes just a {\em system}) 
is for us a pair $(X,T)$ where $X$ is a compact
Hausdorff space and $T : X \to X$ is a self homeomorphism.
With $(X,T)$ we associate an action of the group of integers $\Z$ 
via the map $n \mapsto T^n$. The {\em orbit} of a point $x \in X$ is the set
$\Ocal_T(x) =\{T^n x : n \in \Z\}$. The {\em orbit closure} of $x$
is the set $\OC_T(x) = \ol{\Ocal_T(x)}$. The system $(X,T)$ is 
{\em point-transitive} if there is a point $x \in X$ with $\OC_T(x)= X$.
Such a point is called a {\em transitive point} and the collection of
transitive points (when it is not empty) is denoted by $X_{tr}$.
For a point-transitive metric system $X_{tr}$ is a dense $G_\del$
subset of $X$. We will mostly work in the category of {\em
pointed} dynamical systems $(X,x_0,T)$, where the latter is 
a point-transitive dynamical system with a distinguished point
$x_0 \in X_{tr}$. 
The restriction of $T$ to a closed invariant subset $Y \subset X$ 
in a dynamical system $(X,T)$ defines a dynamical system $(Y,T)$.
Such a system is called a {\em subsystem} of $(X,T)$. 

A continuous surjective map 
$\pi: (X,T) \to (Y,S)$ between two dynamical systems $(X,T)$ and $(Y,S)$
which intertwines the $\Z$-actions (i.e. $\pi(Tx) = S\pi(x)$ for every
$x \in X$) is called a {\em homomorphism} of dynamical systems
and we sometimes say that $Y$ is a {\em factor} of $X$ or that 
$X$ is an {\em extension} of $Y$.
When dealing with pointed systems a homomorphism
$\pi: (X,x_0,T) \to (Y,y_0,S)$ is further assumed to satisfy $\pi(x_0) = y_0$.
An extension $\pi : (X,T) \to (Y,S)$ is called {\em almost 1-1} if there is
a dense $G_\del$ subset $X_0 \subset X$ with 
$\pi^{-1}(\pi(x)) =\{x\}$ for every $x \in X_0$. The extension
$\pi: (X,x_0,T) \to (Y,y_0,S)$ is called a {\em group-extension} if there is 
a compact subgroup $K \subset \Homeo(X)$ such that each $k \in K$
is an automorphism of $(X,T)$ (i.e. $Tk = kT$ for every $k \in K$) 
and such that the quotient dynamical system $(X/K, T)$ is isomorphic to 
$(Y,S)$.

A point $x$ in a metric dynamical system $(X,T)$ is called
{\em recurrent} if there is a sequence $\{n_k\} \subset \Z$
with $|n_k| \to \infty$ such that $\lim T^{n_k} x = x$. 
Note that in a point-transitive system if the set of isolated points is not
empty then it coincides with the orbit of a transitive point.
On the other hand, when there are no isolated points in a point 
transitive system then every point of $X_{tr}$ is recurrent.
We call a point-transitive system with no isolated points a {\em 
recurrent-transitive system}. 

\br

Let $\ell_\infty(\Z)$ be the Banach space (and $C^*$-algebra) of bounded complex valued functions on $\Z$ with the sup norm: 
$\|f\|_\infty= \sup_{n \in \Z} |f(n)|$.
We write $S: \ell_\infty(\Z) \to \ell_\infty(\Z)$ for the shift operator, where
$Sf(n) = f(n+1),\ (n \in \Z)$.
An $S$-invariant, conjugacy invariant subalgebra of $\ell_\infty(\Z)$
containing the constant function $\ch$ will be called a {\em $\Z$-algebra}.

Given a pointed dynamical system $(X,x_0,T)$ we define a map
$j_{x_0}: C(X) \to \ell_\infty(\Z)$ by
$$
j_{x_0}F(n) = F(T^n x_0), \quad F \in C(X),\ n \in \Z.
$$
It is easy to see that $j_{x_0}$ is an isometry with $j_{x_0}\circ T
=S \circ j_{x_0}$. We denote its image in $\ell_\infty(\Z)$ by $\Acal(X,x_0)$.
Clearly $\Acal(X,x_0)$ is a $\Z$-algebra.

Conversely, given a $\Z$-algebra $\Acal\subset \ell_\infty(\Z)$
we denote its Gelfand space (comprising the non-zero $C^*$-homomorphisms
of $\Acal(X,x_0)$ into $\C$) by $X=|\Acal(X,x_0)|$. It is easy
to see that the operator $S$ induces a homeomorphism 
$T : X \to X$ and that the resulting dynamical system $(X,T)$
is a point-transitive system. In fact, the point $x_0=\eva_0 \in X$, which 
corresponds to the multiplicative complex valued homomorphism 
defined on $\Acal$ by evaluation at $0$, is a transitive point.

These operations are inverse to each other and we have
$$
(|\Acal(X,x_0)|,\eva_0,S) \cong (X,x_0,T).
$$
A $\Z$-algebra $\Acal$ is {\em cyclic} if there is a function $f \in \Acal$
such that $\Acal = \Acal_f$, where the latter is the smallest $\Z$-algebra
that contains $f$. 

Given $f \in \ell_\infty(\Z)$ we can consider $f$ as an element
of the compact metrizable space $\Om=[-\|f\|_\infty,\|f\|_\infty]^\Z$.
Again denote by $S : \Om \to \Om$ the homeomorphism
defined by $Sg(n) = g(n+1),\ (g \in \ell_\infty,\ n \in \Z)$.
We let $X_f = \cls\{S^n f : n \in \Z\}$, where the closure is taken in 
$\Om$ with respect to the product topology.
It can be easily verified that $\Acal(X_f,f,S) =\Acal_f$.
Note that as $\Acal_f$ is always separable, $X_f$ is metrizable.

If $\pi:(X,x_0,T)\to (Y,y_0,S)$ is a homomorphism
of pointed transitive systems then the diagram
\begin{equation*}
\xymatrix
{
C(Y) \ar[d]_{\pi^*} \ar[r]^{j_{y_0}} & \Acal(Y,y_0)
\ar[d]^i \\
C(X) \ar[r]_{j_{x_0}} & \Acal(X,x_0)
}
\end{equation*}
commutes. Here $(\pi^* F)(x)=F(\pi x)$, for
$F\in C(Y), x\in X$, and $i$ is the inclusion map.
Conversely, if $B\subset C(X)$ is a closed conjugation-invariant,
$T$-invariant subalgebra containing the constant functions, then
the restriction map
$$
\pi: (|\Acal(X,x_0)|,{\eva}_0,T) \to
(|j_{x_0}(B)|,{\eva}_0,T)
$$
is a pointed homomorphism of dynamical systems.

\br

With every dynamical system $(X,T)$ we associate its 
{\em enveloping semigroup} $E=E(X,T) \subset X^X$. This is 
the pointwise closure of the set $\{T^n : n \in \Z\}$, as a subset of $X^X$.
$E(X,T)$ is a compact {\em right topological semigroup}, i.e.
for each $p \in E(X,T)$ right multiplication $q \mapsto qp,\ q \in E$
is continuous. The set $\{T^n : n \in \Z\}$ is contained in the center of $E$.
In particular, for each $n \in \Z$ the map $p \mapsto T^n x,\  p \in E$
is a homeomorphism of $E$, so that via multiplication by $T$,
$(E,T)$ is also a dynamical system.

\br

A function $f \in \ell_\infty(\Z)$ is an {\em almost periodic}
({\em weakly almost periodic}) function if its orbit
$\{S^n f: n \in \Z\}$ is norm precompact (weakly precompact)
in the Banach space $\ell_\infty(\Z)$. 
We denote by $AP(\Z)$ and $WAP(\Z)$
the collections of almost periodic and weakly almost periodic functions
in $\ell_\infty(\Z)$ respectively. These are $\Z$-algebras with $AP(\Z)
\subset WAP(\Z)$. A point-transitive dynamical system 
$(X,T)$ is called {\em almost periodic} iff $\Acal(X,x_0,T) \subset AP(\Z)$
iff for every $F$ in the Banach space $C(X)$ the orbit
$\{F \circ T^n : n \in \Z\}$ is norm pre-compact. 
The system $(X,T)$ is {\em weakly
almost periodic} iff $\Acal(X,x_0,T) \subset WAP(\Z)$ iff 
for every $F \in C(X)$ the orbit $\{F \circ T^n : n \in \Z\}$ is weakly 
pre-compact. 
It is well known that a point-transitive system $(X,T)$ is almost periodic 
iff it is equicontinuous and minimal. A theorem of 
Ellis and Nerurkar \cite{EN} based on a theorem of Grothendieck
asserts that $(X,T)$ is weakly almost periodic iff
its {\em enveloping semigroup} $E(X,T)$ consists
of continuous maps. Another characterization of WAP systems
(which again goes back to Grothedieck) is that $E(X,T)$ be
a commutative semigroup. Using this observation it is also easy
to deduce the following well known {\em double limit criterion}
(stated here for a general topological group $G$, see e.g. \cite{Rupp}).

\begin{prop}\label{DLC}
For a topological group $G$, a bounded continuous
function $f : G \to \C$ is WAP iff whenever $g_m, h_n$ are 
sequences in $G$ such that the double limits
$$
a=\lim_{n \to \infty}\lim_{m \to \infty} f(g_m h_n)
\quad {\text{and}}\quad
b=\lim_{m \to \infty}\lim_{n \to \infty} f(g_m h_n)
$$
exist, then $a=b$.
\end{prop}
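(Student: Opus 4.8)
The plan is to translate the statement into the language of the transitive system generated by $f$ and its enveloping semigroup, where it becomes a reformulation of the Ellis–Nerurkar/Grothendieck characterisations of WAP systems quoted above. Let $\Acal_f\subset C_b(G)$ be the smallest closed, conjugation‑ and translation‑invariant subalgebra containing $f$ and the constants, let $(X,x_0,G)$ be the associated pointed transitive system (the Gelfand system of $\Acal_f$, built as in Section~1), and let $\tilde f\in C(X)$ be the function corresponding to $f$, so that $f(g_0g)=\tilde f(g_0g\,x_0)$ for all $g_0,g\in G$; since $\tilde f$ and its translates generate $C(X)$, two points $x,x'\in X$ agree iff $\tilde f(gx)=\tilde f(gx')$ for every $g\in G$. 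By the theorems recalled above, $f$ is WAP iff $(X,G)$ is WAP iff every element of $E=E(X,G)$ is a continuous self‑map of $X$ iff $E$ is commutative; and since $p\mapsto px_0$ is continuous on the compact semigroup $E$ with image containing the dense orbit $Gx_0$, we have $X=Ex_0$.

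$(\Rightarrow)$ Assume $f$ is WAP and let $(g_m),(h_n)$ be sequences in $G$ for which the iterated limits $a,b$ exist. Pass to subnets with $g_{m_i}\to p$ and $h_{n_j}\to q$ in the compact semigroup $E$. For fixed $m$, from $h_{n_j}x_0\to qx_0$ and continuity of $y\mapsto\tilde f(g_m y)$, the (existing) inner limit gives $\lim_n f(g_mh_n)=\tilde f(g_m qx_0)$, and then, letting $g_{m_i}\to p$ and using continuity of $\tilde f$, $b=\tilde f(pqx_0)$. In the other order, $\lim_m f(g_mh_n)=\tilde f(p\,h_nx_0)$ by the same argument applied to $g_{m_i}\to p$; then $a=\lim_n\tilde f\big(p(h_nx_0)\big)=\tilde f(pqx_0)$, where this last step uses continuity of the map $p$ — the one place the WAP hypothesis is used. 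Hence $a=b$.

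$(\Leftarrow)$ Assume the double limit condition; it suffices to show $E$ is commutative. One first checks that $stx_0=tsx_0$ for all $s,t\in E$. Writing $s=\lim_i g_i$, $t=\lim_j h_j$ in $E$ and using that right multiplication in $E$ and the action of each group element on $X$ are continuous, one gets $st=\lim_i\lim_j g_ih_j$ and $ts=\lim_j\lim_i h_jg_i$ in $X^X$. Fixing $g_0\in G$, a routine computation — using only continuity of $\tilde f$ and of the individual group elements, in particular not of $s$ or $t$ — shows that both iterated limits of $f(g_0g_ih_j)$ exist, with $\lim_i\lim_j f(g_0g_ih_j)=\tilde f(g_0stx_0)$ and $\lim_j\lim_i f(g_0g_ih_j)=\tilde f(g_0tsx_0)$, the second equality using centrality of each $h_j$ in $E$ to rewrite $\tilde f(g_0sh_jx_0)=\tilde f(g_0h_jsx_0)$ before taking the outer limit. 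The double limit condition forces these equal for every $g_0$, whence $stx_0=tsx_0$. Finally, for $p,q,r\in E$, applying this identity to the pairs $(p,qr)$, $(q,rp)$, $(p,q)$ together with associativity yields $pqrx_0=r(pq)x_0=r(qp)x_0=qprx_0$; since every point of $X$ has the form $rx_0$, we conclude $pq=qp$.

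The delicate point is in $(\Leftarrow)$: the hypothesis is stated for \emph{sequences}, whereas for a general topological group neither $X$ nor $E$ need be metrizable, so the nets $g_i\to s$, $h_j\to t$ cannot in general be replaced by sequences. To close this gap one must invoke the ``angelic'' strengthening of Grothendieck's double limit theorem — equivalently, the fact that Grothendieck's criterion on the compact space $\beta G$ (or on $E$) may be tested on the dense subset $G$ — as carried out in the standard references, e.g.\ \cite{Rupp}. In the metrizable situation, in particular for $G=\Z$ and a single function $f$ (the only case actually used below), $G$ is dense in the metrizable compact semigroup $E$, every point of $E$ is a limit of a sequence from $G$, and the argument above applies verbatim.
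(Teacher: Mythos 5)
The paper itself gives no proof of Proposition \ref{DLC}: it only remarks that the criterion follows from the commutativity/continuity characterizations of the enveloping semigroup and refers to \cite{Rupp}. Your enveloping-semigroup argument is exactly in the spirit of that hint, and your $(\Rightarrow)$ direction, as well as the whole scheme, is sound when the acting group is abelian (in particular for $G=\Z$, the only case the paper later uses). But as a proof of the proposition as stated, for an arbitrary topological group $G$, the $(\Leftarrow)$ direction has a genuine flaw: you invoke ``centrality of each $h_j$ in $E$'' to rewrite $\tilde f(g_0 s h_j x_0)=\tilde f(g_0 h_j s x_0)$. The image of $G$ is central in $E(X,G)$ only for abelian $G$; for non-abelian $G$ this step fails. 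More fundamentally, your target --- commutativity of $E$ --- is not the right reformulation of WAP beyond the abelian case: if $G$ is a compact non-abelian group acting on itself by translations and $f$ is a matrix coefficient, the system is equicontinuous, hence WAP, and $f$ satisfies the double limit condition, yet $E\cong G$ is non-commutative; so no correct argument can deduce $stx_0=tsx_0$ from the hypothesis. For general $G$ the backward implication must aim at condition (4) of Theorem \ref{WAP-thm} (every element of $E$ is continuous), or directly at weak precompactness of the orbit of $f$ via Grothendieck's theorem on $C(X)$; the equivalence with commutativity in Theorem \ref{WAP-thm} is specific to $\Z$ (abelian) actions.

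A secondary inaccuracy: your repair of the sequences-versus-nets issue for $G=\Z$ rests on the claim that, $X_f$ being metrizable, ``every point of $E$ is a limit of a sequence from $G$''. This is false in general: $E\subset X^X$ carries the product topology and need not be first countable even when $X$ is metrizable (a sequential pointwise limit of the maps $S^n$ is of Baire class $1$, whereas a general element of $E$ need not be). What your computation actually requires --- and what does hold --- is convergence of group elements to $s$ and to $t$ only at the countably many points $x_0$, $tx_0$, $sx_0$ and $h_jx_0$ appearing in the iterated limits; since pointwise convergence on a countable subset of the metrizable $X$ is metrizable, such sequences can be extracted from the nets (choose the sequence for $t$ first, then the one for $s$), and the iterated limits then exist by construction. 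With that repair your argument is complete for $\Z$ (and for abelian $G$ with metrizable $X_f$); for the general statement you are, in effect, deferring --- as the paper itself does --- to \cite{Rupp}.
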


For WAP $\Z$-systems we have $E(X,T) \cong (X,T)$, 
see e.g. \cite{Do} or \cite{G}.

We summarize these results in the following:

\begin{thm}\label{WAP-thm}
Let $(X,T)$ be a point-transitive dynamical system and let
$E=E(X,T)$ be its enveloping semigroup.
The following conditions are equivalent.
\begin{enumerate}
\item
The system $(X,T)$ is WAP.
\item
$E$ is a semi-topological semigroup, i.e. both right and left
multiplications are continuous.
\item
$E$ is a commutative semigroup.
\item
$E$ consists of continuous maps.
\end{enumerate}
\end{thm}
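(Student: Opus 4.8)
The plan is to take the equivalences $(1) \Leftrightarrow (3)$ and $(1) \Leftrightarrow (4)$ as already in hand --- these are precisely the theorem of Grothendieck and the theorem of Ellis and Nerurkar recalled just above the statement --- and to fold condition $(2)$ into the resulting circle of implications by two short, purely formal arguments. The only structural facts about $E = E(X,T)$ that I would use are the two recorded earlier: right multiplications on $E$ are always continuous, and the dense set $\{T^n : n \in \Z\}$ lies in the center of $E$.

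For $(3) \Rightarrow (2)$: if $E$ is commutative then, for each $p \in E$, the map $q \mapsto pq$ coincides with $q \mapsto qp$, so left multiplication by $p$ equals right multiplication by $p$ and is therefore continuous; hence $E$ is semi-topological. For $(2) \Rightarrow (3)$: fix $p, q \in E$ and write $p = \lim_\alpha T^{n_\alpha}$ for a net in $\{T^n : n \in \Z\}$, which is dense in $E$. Continuity of right multiplication by $q$ gives $pq = \lim_\alpha T^{n_\alpha} q$; since each $T^{n_\alpha}$ is central, $T^{n_\alpha} q = q T^{n_\alpha}$; and continuity of left multiplication by $q$, which is exactly hypothesis $(2)$, gives $\lim_\alpha q T^{n_\alpha} = qp$. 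Hence $pq = qp$, so $E$ is commutative. Combined with the two cited biconditionals this yields $(1) \Leftrightarrow (3) \Leftrightarrow (2)$ alongside $(1) \Leftrightarrow (4)$, i.e. the asserted equivalence of all four conditions. One may alternatively observe $(4) \Rightarrow (2)$ directly: the topology on $E \subseteq X^X$ is that of pointwise convergence, so if each $p \in E$ is a continuous self-map of $X$ then $q \mapsto p \circ q$ is continuous, since $q_\alpha \to q$ forces $p(q_\alpha(x)) \to p(q(x))$ for every $x \in X$.

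Since the genuinely analytic content here --- the link between weak precompactness of the orbits $\{F \circ T^n : n \in \Z\}$ in $C(X)$ and the algebraic/topological properties of $E$ --- is carried entirely by the Grothendieck and Ellis--Nerurkar theorems that we are entitled to quote, I do not anticipate a real obstacle; the single point that needs care is keeping the directions of the implications straight, and in particular invoking the fact that an enveloping semigroup is automatically right topological, which is what makes the two short arguments above run.
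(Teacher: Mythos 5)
Your proposal is correct and matches what the paper does: the theorem is stated as a summary of the quoted Grothendieck and Ellis--Nerurkar results, and the only thing left to argue is folding in condition (2), which you handle properly. Your density-plus-centrality argument for $(2)\Rightarrow(3)$, the trivial $(3)\Rightarrow(2)$, and the pointwise-convergence observation for $(4)\Rightarrow(2)$ are exactly the routine steps the paper leaves implicit, and they all use only the facts the paper records (right multiplication is always continuous, $\{T^n : n\in\Z\}$ is dense and central in $E$).
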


\br

A point $x$ in a metric dynamical system $(X,T)$ is an 
{\em equicontinuity point} if for every $\ep>0$ there is a $\del>0$
such that $d(x,x')< \del \imp d(T^nx, T^nx')< \ep$ for all $n \in \Z$.
A dynamical system with a residual set of equicontinuity points
is called an {\em almost equicontinuous} system. A recurrent-transitive metric
almost equicontinuous system $(X,T)$ is uniformly rigid
and the the set of equicontinuity points in $X$ coincides with $X_{tr}$.
Moreover, the map $\La_X \onto X_{tr},\ \la \mapsto \la x_0$,
is a homeomorphism of the Polish group $\La_X$ onto the dense
$G_\del$ subset $X_{tr} \subset X$ \cite{GW2}.

By results of Glasner and Weiss \cite{GW} and
Akin, Auslander and Berg \cite{AAB} it follows that if $(X,T)$ is a metric
recurrent-transitive WAP dynamical system, then
the system $(X,T)$ is {\em uniformly rigid}, i.e. there is a
sequence  of positive integers $n_k \to \infty$ such that the sequence
$\{T^{n_k}: k \in \N\}$ tends uniformly to the identity map on $X$
(see \cite{GMa}).
In a uniformly rigid system the uniform closure 
$$
\La_X = \cls \{T^n: n \in \Z \} \subset \Homeo(X),
$$
is a non-discrete Polish monothetic topological group. We then have
$X_{tr} = \La_X x_0$.

When $(X,T)$ is recurrent-transitive metric WAP we know much more: 
The system $(X,T)$
is {\em hereditarily almost equicontinuous}; 
i.e., every subsystem $Y \subset X$ is almost equicontinuous 
(see \cite{GW}, \cite{AAB}, \cite{GW2} and \cite{GM}).

\section{Reflexive and Hilbert representability of groups
and dynamical systems}

\begin{defns}
\begin{enumerate}
\item
We say that a Polish topological group $P$ is {\em representable} 
on a reflexive Banach space $V$ if there is a topological isomorphism 
of $P$ into the group of isometries $\Iso(V)$ of $V$ equipped with the 
strong operator topology. 
The group $P$ is {\em reflexibly-representable} 
it is representable on some reflexive Banach space.
\item
The group $P$ is {\em Hilbert-representable} if there is a topological 
isomorphism of $P$ into the unitary group $\Ucal(\Hcal)$ of a Hilbert space 
$\Hcal$ equipped with the strong operator topology (see Megrelishvili \cite{M1}
and \cite{Me}).
\item 
As in Banaszczyk \cite{Ba} we say that $P$ is 
{\em exotic} if it does not admit any nontrivial continuous unitary 
representations. 
We say that $P$ is {\em strongly exotic} if it does not admit any nontrivial
weakly continuous representations in Hilbert spaces.
\end{enumerate}
\end{defns}

\begin{defns}
\begin{enumerate}
\item
A 
metric dynamical system $(X,T)$ is called  
{\em reflexively-repre\-sentable} if 
there is a reflexive Banach space $V$, a linear isometry
$U \in Iso(V)$  and a weakly compact $U$-invariant
subset $Z$ of $V$ such that the dynamical systems
$(X,T)$ and $(Z,U)$ are isomorphic.
\item
A 
metric dynamical system $(X,T)$ is called 
{\em Hilbert-representable} if 
there is a Hilbert space $\Hcal$, a unitary operator $U \in \Ucal(\Hcal)$
and a weakly compact  $U$-invariant subset $Z$ of  $\Hcal$ such that the dynamical systems $(X,T)$ and $(Z,U)$ are isomorphic.
\item
Let $B(\Z)$ be the sub-algebra of $\ell_\infty(\Z)$ which consists of
the Fourier-Steiltjes transforms of complex measures on the circle $\T$,
i.e. $B(\Z) = \Fcal (M(\T))$ where $\Fcal: M(\T) \to \ell_\infty(\Z)$
is the Fourier transform:
$$
\Fcal(\mu)(n) = \hat\mu(n) = \int_{\T} e^{int} \,d\mu(t).
$$
\item
Let $H(\T)=\ol{B(\Z)}$ be the norm closure of $B(\Z)$ in the Banach
space $\ell_\infty(\Z)$. Clearly $H(\Z)$ is a $\Z$-subalgebra of $\ell_\infty(\Z)$.
\item
A point-transitive system $(X,x_0,T)$ is called {\em Hilbert} if
$\Acal(X,x_0) \subset H(\Z)$. The elements of $H(\Z)$ are called 
{\em Hilbert functions}.
\end{enumerate}
\end{defns}

Suppose $(X,T)$ is reflexively-representable. Thus we assume that
there is a reflexive Banach space $V$, a linear isometry $U \in \Iso(V)$
and that $X \subset V$ is a weakly compact $U$-invariant
subset, with $T=U\rest X$.
Let $x_0$ be a vector in $X \subset V$ and $\phi$ a vector in $V^*$. 
Set $F(x) =\langle x, \phi\rangle$ 
and $f(n) = F(T^nx_0)=\langle T^nx, \phi\rangle$ 
(such a function is called a {\em matrix coefficient} of the representation).
Then, it is easy to see how the weak-compactness of the
unit ball of $V^*$ implies that the $T$-orbit of $F$ in $C(X)$ is
weakly precompact. 
Since $\Acal(X,x_0)\cong C(X)$ it follows
that the $S$-orbit of $f$ in $\ell_\infty(\Z)$ is also weakly precompact, 
i.e. the function $f$ is in $WAP(\Z)$. 

It turns out that the converse is also true.
We have the following basic theorems of Shtern \cite{S}
and Megrelishvili \cite{M1} concerning
reflexive representability.
We formulate these results in the context of a general topological group $G$,
where the $C^*$-algebra $LUC(G)$ of bounded, complex valued, 
left uniformly continuos functions takes the place of $\ell_\infty(\Z)$.
Thus e.g. $WAP(G)$ is the
$G$-subalgebra of $LUC(G)$ comprising functions
whose $G$-orbit is weakly pre-compact.

\begin{thm}\label{S-M}
\begin{enumerate}
\item 
A topological group $G$ can be faithfully represented
on a reflexive Banach space $V$ iff the $WAP(G)$ functions
separate points and closed sets on $G$ \cite{S}, \cite{M1}.  
%
\item 
Let $G$ be a topological group, then every $f \in WAP(G)$ is a 
matrix coefficient of a reflexive representation of $G$ \cite{M1}.
\item  
A metrizable $G$-system $(X,G)$ is WAP iff it is reflexively-representable
\cite{M1}.  
\end{enumerate}
\end{thm}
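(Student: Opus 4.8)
The common engine is the factorization theorem of Davis, Figiel, Johnson and Pelczynski: for a weakly compact, convex, symmetric set $W$ in a Banach space $E$ there is a reflexive Banach space $E_W$ and a continuous linear injection $j\colon E_W\hookrightarrow E$ whose image contains $W$ inside the unit ball of $E_W$; the space $E_W$ (built from the Minkowski gauges of the sets $2^nW+2^{-n}B_E$) is separable when $E$ is, and every bounded operator $R$ on $E$ with $R(W)\subseteq W$ restricts to a bounded operator on $E_W$, which is an isometry when $R(W)=W$.

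Half of each equivalence is already implicit in the paragraph preceding the theorem. If $(X,T)\cong(Z,U)$ with $Z$ weakly compact and $U$-invariant in a reflexive $V$, then for $\phi\in V^*$ the matrix coefficient $F_\phi(x)=\langle x,\phi\rangle$ has relatively weakly compact orbit in $C(X)$, since $(U^*)^n\phi$ stays in a bounded, hence relatively weakly compact, subset of $V^*$; the $F_\phi$ separate points of $Z\cong X$ and, by Stone--Weierstrass, generate $C(X)$, so $\Acal(X,x_0)\subseteq WAP(G)$. This proves ``reflexively representable $\Rightarrow$ WAP'' in (3), and specializing to a faithful reflexive representation it proves the forward direction of (1), the matrix coefficients separating points from closed sets because the representation is a topological embedding. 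Conversely, granting (2), the backward direction of (1) follows by choosing, for each $f$ in a family $\Fcal\subseteq WAP(G)$ separating points from closed sets, the reflexive representation supplied by (2), and forming the $\ell^2$-sum $\bigl(\bigoplus_{f\in\Fcal}V_f\bigr)_{\ell^2}$, which is again reflexive; the separation hypothesis makes this representation injective and a homeomorphism onto its image.

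It remains to manufacture reflexive representations from WAP data. For (2), let $E=LUC(G)$ with its isometric, norm-continuous translation action, and fix $f\in WAP(G)\subseteq E$. Its orbit is relatively weakly compact by hypothesis, so by the Krein--Smulian theorem $W:=\conv\bigl(\{\,\pm g\cdot f:g\in G\,\}\bigr)$ is weakly compact, convex, symmetric and translation-invariant; DFJP yields a reflexive $V=E_W$ on which the action restricts to an isometric representation $\pi$, the vector $f$ lies in the unit ball of $V$, and pulling the evaluation functional at the identity back along $j$ gives $\phi_0\in V^*$ with $\langle\pi(g)f,\phi_0\rangle=f(g^{-1})$ (or $f(g)$, depending on the chosen side); since $WAP(G)$ is stable under $g\mapsto g^{-1}$, $f$ itself is a matrix coefficient of $\pi$. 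For the outstanding direction of (3), let $(X,T)$ be metrizable and WAP and choose norm-one $F_i\in C(X)$ separating the points of $X$. Each $W_i:=\conv\bigl(\{\,\pm F_i\circ T^n:n\in\Z\,\}\bigr)$ is weakly compact (Krein--Smulian), hence so is $K:=\{0\}\cup\bigcup_i 2^{-i}W_i$ (only finitely many summands meet a given neighbourhood of $0$), and therefore so is $W:=\conv(K)$; $W$ is $T$-invariant and contains each $2^{-i}F_i$. DFJP gives a reflexive $V=E_W\hookrightarrow C(X)$ and an isometry $U$ of $V$ induced by $F\mapsto F\circ T$. Dualizing, $j^*\colon C(X)^*\to V^*$ is weak$^*$-to-weak$^*$ continuous, and as $V^*$ is reflexive it carries the weak$^*$-compact, $T$-invariant set $\{\del_x:x\in X\}$ homeomorphically and equivariantly onto a weakly compact, $U^*$-invariant subset $Z$ of the reflexive space $V^*$; injectivity on the $\del_x$ holds because $j(V)$ contains the separating family $\{F_i\}$. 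Thus $(X,T)\cong(Z,U^*)$ is reflexively representable.

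The delicate point --- the reason these are not formal consequences of the DFJP theorem --- is the \emph{continuity} of $\pi$ and $U$ in the strong operator topology: the DFJP norm is strictly finer than the ambient sup-norm, so norm-continuity of an orbit map in $E$ does not pass automatically to $E_W$, and one must use the weak compactness of $W$, together with the explicit form of the interpolation norm, to verify that $g\mapsto\pi(g)\xi$ is norm-continuous for each $\xi$. A companion subtlety: to match the definition of representability by \emph{isometries}, one may first replace the reflexive norm by an equivalent locally uniformly rotund one (Troyanski's renorming theorem), so that for nets of equal norm weak and norm convergence agree; the a priori merely weakly continuous contragredient action on $Z$ then becomes strongly continuous by isometries of the renormed space. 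I expect this continuity/renorming issue to be the main obstacle, whereas the Krein--Smulian facts about weak compactness of $K$ and $W$, and the separability of $E_W$, are routine.
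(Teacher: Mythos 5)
The paper does not actually prove Theorem \ref{S-M}: it is quoted as background, with the proofs attributed to Shtern \cite{S} and Megrelishvili \cite{M1}, so there is no internal argument to compare yours against. Your sketch follows essentially the route of those cited papers: apply the Davis--Figiel--Johnson--Pe{\l}czy\'nski factorization to the closed convex symmetric hull of a weakly precompact orbit, observe that the isometric translation (resp.\ composition) operators preserve the interpolation space $E_W$ and act there isometrically, and then recover the function as a matrix coefficient via the pulled-back evaluation functional, resp.\ recover the system inside the reflexive dual via $j^*$ applied to the point masses $\{\del_x\}$. As a sketch this is sound, and you have correctly identified the genuinely nontrivial point, namely continuity of the induced action on $E_W$; in \cite{M1} this is handled by the estimates $\|x\|_n\le 2^{-n}$ for $x\in W$ together with $\|\cdot\|_n\le 2^n\|\cdot\|_E$, splitting the $\ell^2$-sum into a finite head (controlled by norm continuity in $E$) and a tail (controlled uniformly on $W$); note that weak compactness of $W$ enters only to make $E_W$ reflexive, not to prove continuity.

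Two smaller points deserve tightening. First, in the forward direction of (1) you dismiss the separation of points and closed sets with ``because the representation is a topological embedding''; but matrix coefficients only generate the \emph{weak} operator topology on the image of $G$, so you are implicitly using that the weak and strong operator topologies coincide on $\Iso(V)$ for reflexive $V$ --- this is exactly where the Troyanski/LUR renorming you mention at the end is really needed, rather than in (3), where the contragredient $U^*$ is automatically an isometry of the reflexive dual and no group-continuity issue arises since the acting group in the paper's definition of a reflexively-representable system is just $\Z$. Second, in deducing the backward direction of (1) from (2) via an $\ell^2$-sum over a possibly uncountable separating family, you should either restrict to a countable (or norm-dense) subfamily when $G$ is Polish or note that an arbitrary $\ell^2$-sum of reflexive spaces is still reflexive and that injectivity plus lower semicontinuity of the seminorms gives a topological embedding; as written the homeomorphism-onto-image claim is asserted rather than argued. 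Neither point is a fatal gap, but both are precisely the places where the cited proofs do real work.
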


\br

\begin{lem}\label{HR}
If a point-transitive metric system $(X,x_0,T)$ is Hilbert-representable 
then there is a function $F \in C(X)$ such that the corresponding 
$f \in \Acal(X,x_0)$,
i.e. the function $f \in \ell_\infty(\Z)$ defined by 
$f(n) = F(T^nx_0)$, is positive definite and satisfies
$\Acal(X,x_0) = \Acal_f$, where the latter is the $\Z$-subalgebra
generated by $f$ in $\ell_\infty(\Z)$.
\end{lem}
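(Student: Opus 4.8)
The plan is to realize $F$ as a diagonal matrix coefficient based at the transitive point. By the definition of Hilbert-representability we may identify $(X,x_0,T)$ with $(Z,z_0,U\rest Z)$, where $U\in\Ucal(\Hcal)$ is a unitary operator on a Hilbert space $\Hcal$, $Z\subset\Hcal$ is a weakly compact $U$-invariant subset, and $z_0\in Z$ is the distinguished point, which is then a transitive point of $(Z,U)$.

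The first step is to arrange that the representation is cyclic with cyclic vector $z_0$. The closed linear span $\Hcal_0=\ol{\spann\{U^nz_0:n\in\Z\}}$ is norm closed, hence weakly closed, and it contains the orbit $\Ocal_U(z_0)=\{U^nz_0:n\in\Z\}$; since $z_0$ is a transitive point, $Z$ equals the weak closure of this orbit, so $Z\subseteq\Hcal_0$, and we may replace $\Hcal$ by $\Hcal_0$. Now set $F(z)=\langle z,z_0\rangle$ for $z\in Z$. Being the restriction of a bounded linear functional, $F$ is weakly continuous, hence $F\in C(Z)=C(X)$, and the element of $\Acal(X,x_0)$ it determines is $f(n)=F(U^nz_0)=\langle U^nz_0,z_0\rangle$. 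That $f$ is positive definite is the usual computation using unitarity of $U$: for $c_1,\dots,c_k\in\C$ and $n_1,\dots,n_k\in\Z$ one has $\sum_{i,j}c_i\overline{c_j}\,f(n_i-n_j)=\sum_{i,j}c_i\overline{c_j}\,\langle U^{n_i}z_0,U^{n_j}z_0\rangle=\bigl\|\sum_i c_iU^{n_i}z_0\bigr\|^2\ge0$.

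It remains to verify $\Acal_f=\Acal(X,x_0)$. Transporting the question through the isometry $j_{x_0}$, which is an algebra isomorphism intertwining $T$ with $S$ and commuting with complex conjugation, this is equivalent to showing that the smallest closed, conjugation-invariant, $T$-invariant subalgebra $\Bcal\subseteq C(X)$ that contains $F$ and $\ch$ is all of $C(X)$. Since $\Bcal$ contains the constants and is conjugation-closed, the Stone--Weierstrass theorem reduces this to: the functions $\{F\circ T^n:n\in\Z\}$ separate the points of $Z$. Now $(F\circ T^n)(z)=\langle U^nz,z_0\rangle=\langle z,U^{-n}z_0\rangle$, so if $z,z'\in Z$ satisfy $(F\circ T^n)(z)=(F\circ T^n)(z')$ for all $n$, then $z-z'$ is orthogonal to every $U^mz_0$, i.e.\ $z-z'\perp\Hcal_0=\Hcal$, whence $z=z'$. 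This gives the separation and the lemma follows.

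The only step that is not essentially routine is the reduction to the cyclic case in the second paragraph. A priori the Hilbert representation we are handed need not be cyclic: the cyclic subspace of $z_0$ could be proper while $Z$ spreads out over a larger part of $\Hcal$, and then $F=\langle\,\cdot\,,z_0\rangle$ together with its $T$-translates would fail to separate the points of $Z$. The point is that a transitive point — and hence its entire orbit closure, which is all of $Z$ — is automatically confined to that point's own cyclic subspace; once this is noted the choice $F=\langle\,\cdot\,,z_0\rangle$ is forced and everything else is the standard matrix-coefficient computation together with Stone--Weierstrass.
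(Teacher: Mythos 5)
Your proof is correct and follows essentially the same route as the paper: pass to the cyclic subspace $Z_U(x_0)$ (a step the paper asserts as ``without loss of generality'' and you justify explicitly), take $F=\langle\,\cdot\,,x_0\rangle$ so that $f$ is the positive definite matrix coefficient $\langle U^nx_0,x_0\rangle$, and conclude $\Acal_f=\Acal(X,x_0)$ because the translates $F\circ T^n$ separate points of $X$ (the paper leaves the Stone--Weierstrass step implicit, you spell it out).
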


\begin{proof}
By assumption we can consider $X$ as a weakly compact subset
of a separable Hilbert space $\Hcal$ and $T= U\rest X$, where 
$U \in \Ucal(\Hcal)$, the unitary group of $\Hcal$.
With no loss in generality we also assume that $\|x_0\|=1$ and that
$\Hcal = Z_U(x_0)$, where the latter is the $U$-cyclic space generated
by $x_0$.
Set $F(x) = \langle x, x_0 \rangle$. Then $F\in C(X)$ and
$$
f(n) = F(U^nx_0) =  \langle U^n x_0, x_0 \rangle
$$
is indeed positive definite.

If $x \ne y$ are points in $X$ then, as $\Hcal = Z_U(x_0)$,
there is some $n \in \Z$ with 
$$
F(U^{-n}x)=\langle x, U^{-n}x_0 \rangle
\ne \langle y, U^{-n}x_0 \rangle = F(U^{-n}y),
$$
so that the sequence of functions $\{F\circ U^n\}_{n\in \Z}$ separates
points on $X$, whence $\Acal(X,x_0) = \Acal_f$.
\end{proof}

\begin{lem}\label{PD}
Suppose $f \in \ell_\infty(\Z)$ is positive definite.
\begin{enumerate}
\item 
The system $(X_f,f,S)$ is Hilbert-representable, i.e.
there exists a Hilbert space $\Hcal$, a unit vector $x_0 \in \Hcal$ and a 
unitary operator $U \in \Ucal(\Hcal)$ such that $\Hcal  = Z_U(x_0)$, 
$X = w$-$\cls \{U^nx_0 : n \in \Z\}$, and $(X_f,f,S) \cong (X,x_0,U)$.
\item
Every element $g$ of $X_f= \OC(f) \subset [-\|f\|,\|f\|]^\Z$ has the form
$g(j) =  \langle U^j x_0, x \rangle$ for some $x \in X$. 
In particular $g \in B(\Z)$. 
\item
For $g \in X_f = \OC(f)$ we have:
$g \in \Ocal(f)$ iff there is $n \in \Z$ with $g(n) = \|g\|=\|f\|$,
in which case $g = S^n f$. In particular for $g \in X_f$ we have
$g(0) = \|g\|=\|f\| \imp g =f$.
\item 
$(X_f)_{tr} = \{g \in X_f : \|g\| = \|f\|\}$.
\end{enumerate}
\end{lem}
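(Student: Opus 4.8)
The plan is to make $(X_f,f,S)$ concrete through the GNS construction and then extract the four statements from the resulting Hilbert-space picture together with elementary geometry of Hilbert space. After rescaling $f$ (a homeomorphism of $\Om$ commuting with $S$) we may assume $\|f\|=f(0)=1$, the case $f\equiv0$ being trivial. Feeding the positive definite function $f$ into the GNS machine produces a separable Hilbert space $\Hcal$, a unit vector $x_0$ and a unitary $U$ with $\Hcal=Z_U(x_0)$ and $f(n)=\langle U^nx_0,x_0\rangle$. Put $X=w\text{-}\cls\{U^nx_0:n\in\Z\}$; being a bounded, weakly closed subset of a separable Hilbert space it is weakly compact and metrizable, it is $U$-invariant, and $x_0$ is a transitive point of $(X,U)$. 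The map $\psi$ sending $x$ to the sequence $\bigl(\langle U^nx,x_0\rangle\bigr)_{n\in\Z}$ --- where one may first have to replace $U$ by $U^{-1}$ and/or conjugate so as to arrange $\psi(x_0)=f$, $\psi\circ U=S\circ\psi$ and $\psi(U^nx_0)=S^nf$ --- is continuous from $X$ with its weak topology into $\Om$ with its product topology, and it is injective on $\Hcal$ because $\langle U^n(x-y),x_0\rangle\equiv0$ forces $x-y\perp Z_U(x_0)$. Hence $\psi\rest X$ is a homeomorphism onto the closed set $\psi(X)$, which contains $\{S^nf\}$ and is contained in $\ol{\{S^nf\}}$; so $\psi(X)=X_f$ and $\psi\rest X$ is the required isomorphism $(X,x_0,U)\cong(X_f,f,S)$. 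This proves (1).

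Statement (2) is then immediate: each $g\in X_f$ equals $\psi(x)$ for some $x\in X$, i.e.\ $g(j)$ is a matrix coefficient $\langle U^jx,x_0\rangle$ (in the conventions of (1)), and the spectral theorem $U^j=\int_\T z^j\,dE(z)$ rewrites this as $g(j)=\int_\T z^j\,d\mu(z)$ for the complex measure $\mu=\langle E(\cdot)x_0,x\rangle$, so $g=\hat\mu\in B(\Z)$. For (3): if $g\in\Ocal(f)$, say $g=S^nf$, then $\|g\|=\|f\|$ and $g$ attains the value $\|f\|$ (at index $-n$); conversely, if $g=\psi(x)$ and $g$ attains the value $\|f\|=1$ at some index, then, since $x$ is a weak limit of unit vectors one has $\|x\|\le1$, and the corresponding equality in the Cauchy--Schwarz inequality forces $x$ to lie on the orbit of $x_0$, so $g\in\Ocal(f)$; the last sentence of (3) is the special case of index $0$. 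Finally, the easy half of (4) is similar: a transitive $g$ satisfies $S^{n_k}g\to f$ in $\Om$ for suitable $n_k$, so $g(n_k)\to f(0)=\|f\|$ and $\|g\|\ge\|f\|$, while $\|g\|\le\|f\|$ for every member of $X_f\subset[-\|f\|,\|f\|]^\Z$.

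The heart of the matter is the reverse inclusion in (4). Let $g=\psi(\phi)\in X_f$ with $\|g\|=\|f\|$; as in (3) this forces $\|\phi\|=1$, and we must show that $\phi$ is a transitive point of $(X,U)$, equivalently $x_0\in\ol{\Ocal_U(\phi)}$. Pick $n_k$ with $U^{n_k}x_0\to\phi$ weakly (possible since the weak topology of $X$ is metrizable). Because all these vectors have norm $1$, the convergence is in fact in norm: $\|U^{n_k}x_0-\phi\|^2=2-2\,\Re\langle U^{n_k}x_0,\phi\rangle\to 2-2\|\phi\|^2=0$. Now apply the unitary $U^{-n_k}$ and use $\langle U^{-n_k}\phi,x_0\rangle=\langle\phi,U^{n_k}x_0\rangle\to\|\phi\|^2=\|x_0\|^2$; the same computation gives $U^{-n_k}\phi\to x_0$ in norm, hence weakly. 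Therefore $x_0\in\ol{\Ocal_U(\phi)}$, so $\phi$ --- and hence $g=\psi(\phi)$ --- is a transitive point.

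The step I expect to be the main obstacle is precisely this last one: to pass from ``$\phi$ is a weak cluster point of the orbit of $x_0$ and $\|\phi\|=\|x_0\|$'' to ``$\phi$ is an honest strong limit of a subsequence of that orbit'' one needs the strict convexity of the Hilbert ball, and it is this strong convergence that makes it legitimate to run the dynamics backwards by $U^{-n_k}$ and land back at the base point $x_0$. Everything else --- GNS, weak compactness, the spectral theorem, and the Cauchy--Schwarz equality analysis --- is routine.
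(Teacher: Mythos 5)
Your proof is correct, and for parts (1)--(3) it follows essentially the paper's route: the Herglotz/GNS representation $f(n)=\langle U^nx_0,x_0\rangle$, the identification of every $g\in X_f$ as a matrix coefficient $j\mapsto\langle U^j\,\cdot\,,\cdot\,\rangle$ of a weak limit point (whence $g\in B(\Z)$ by the spectral theorem), and the equality case of Cauchy--Schwarz to pin down when the value $\|f\|$ is attained. Where you genuinely diverge is the converse inclusion in (4), which you rightly identify as the heart of the lemma. The paper stays in sequence space: from $\|g\|=\|f\|$ it extracts $n_k$ with $g(n_k)\to\|f\|$, passes to a limit $h=\lim S^{n_k}g$ with $h(0)=\|f\|$, invokes part (3) to get $h=f$, and concludes $f\in\OC(g)$, hence $g$ transitive. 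You instead argue in the Hilbert space: $\|g\|=\|f\|$ forces the representing vector $\phi$ to have norm one, so the weak convergence $U^{n_k}x_0\to\phi$ upgrades to norm convergence (Radon--Riesz via the parallelogram computation), and applying the isometries $U^{-n_k}$ sends $\phi$ back to $x_0$, giving $x_0\in\OC_U(\phi)$ directly. The two arguments cost about the same, but yours buys a small bonus: the paper's step ``$g(n_k)\to f(0)=\|f\|$'' tacitly assumes the supremum of $g$ itself (not merely of $|g|$) approaches $+\|f\|$, a sign point that needs a word for real-valued $g$, whereas your estimate $\sup_j|g(j)|\le\|\phi\|$ is insensitive to sign and sidesteps the issue; the paper's version, on the other hand, is self-contained at the symbolic level once part (3) is known and does not need the strict convexity of the Hilbert ball a second time. (Two cosmetic quibbles: in (4) the norm bound $\|\phi\|\ge 1$ comes from the plain Cauchy--Schwarz inequality over all $j$, not from the equality analysis ``as in (3)''; and in (2) the spectral measure should be $\langle E(\cdot)x,x_0\rangle$ in your convention --- neither affects the argument.)
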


\begin{proof}
1.\ 
This is a consequence of Herglotz' theorem. 

2.\
Let $g \in X_f$. Then there exists a sequence $n_k \to \infty$ with
$$
g(j) = \lim_{k \to \infty} f(j + n_k) = \lim_{k \to \infty} S^{n_k} f(j) =
\lim_{k \to \infty} \langle U^j x_0, U^{n_k} x_0 \rangle
=  \langle U^j x_0, x \rangle ,
$$
where we assume, with no loss in generality, that $x = w$-$\lim_{k \to\infty}
U^{n_k}x_0$ exists.
It follows that $g \in B(\Z)$.

3.\ 
Suppose $g \in X_f$ satisfies $g(0) = \|g\|=\|f\|$. As we have seen 
in part 1, there is an $x \in X$ with  $g(j) = \langle U^j x_0, x \rangle$ 
for every $j\in \Z$ and  our assumption reads:
$$
1= g(0) = \langle x , x_0 \rangle. 
$$
We conclude that $x = x_0$, hence 
$$
g(j) = \langle U^j x_0, x_0 \rangle = f(j),
$$
for all $j \in \Z$, i.e. $g = f$.

4.\ 
Let $g \in (X_f)_{tr}$. There is the a sequence $n_k$ with 
$\lim_{k \to \infty} S^{n_k} g(j)  = \lim_{k \to \infty}  g(j + n_k) = f(j)$
for every $j \in \Z$. In particular $\lim_{k \to \infty} g(n_k) = f(0) = \|f\|$.
Thus $\|f \| \le \|g\| \le \|f\|$, hence $\|f \|= \|g\|$.
Conversely, assuming $\|f \|= \|g\|$, we have 
$\lim_{k \to \infty} g(n_k) = f(0)=\|f\|$. With no loss of generality we can assume 
that $h = \lim_{k \to \infty} S^{n_k}g$ exists in $X_f$, so that $h(0) =\|f\|$.
By part 3, $h = f$ and we conclude that $g \in (X_f)_{tr}$.
\end{proof}

\begin{prop}\label{sh=pd}
A point-transitive system $(X,x_0,T)$ is Hilbert-representable iff there is
a positive definite function $f \in \ell_\infty(\Z)$ such that
$(X,x_0,T) \cong (X_f,f,S)$, where $X_f$
is the orbit closure of $f$ in $\ell_\infty(\Z)$ under the shift
$S$ with respect to the pointwise convergence topology.
In particular every Hilbert-representable system is Hilbert.
\end{prop}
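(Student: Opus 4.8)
The plan is to assemble the statement directly from Lemmas \ref{HR} and \ref{PD}, together with the Gelfand duality between pointed point-transitive systems and cyclic $\Z$-algebras recalled in Section 1 (in particular the identities $(|\Acal(X,x_0)|,\eva_0,S)\cong(X,x_0,T)$ and $\Acal(X_f,f,S)=\Acal_f$).

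First I would prove the forward implication. Suppose $(X,x_0,T)$ is Hilbert-representable; since Hilbert-representability is only defined for metric systems, $(X,x_0,T)$ is metric, so Lemma \ref{HR} applies and produces $F\in C(X)$ whose associated matrix coefficient $f(n)=F(T^nx_0)$ is positive definite and satisfies $\Acal(X,x_0)=\Acal_f$. On the other hand, for this same $f$ one has $\Acal(X_f,f,S)=\Acal_f$, with $X_f=\cls\{S^nf:n\in\Z\}\subset[-\|f\|_\infty,\|f\|_\infty]^\Z$ the pointwise orbit closure. Hence $\Acal(X,x_0)$ and $\Acal(X_f,f,S)$ are the \emph{same} $\Z$-subalgebra of $\ell_\infty(\Z)$, and applying the Gelfand functor $\Acal\mapsto(|\Acal|,\eva_0,S)$, which is inverse to $(X,x_0,T)\mapsto\Acal(X,x_0)$, yields $(X,x_0,T)\cong(|\Acal_f|,\eva_0,S)\cong(X_f,f,S)$, as required.

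For the reverse implication there is essentially nothing to do: if $(X,x_0,T)\cong(X_f,f,S)$ with $f$ positive definite, then Lemma \ref{PD}(1) says exactly that $(X_f,f,S)$, and hence $(X,x_0,T)$, is Hilbert-representable. Finally, for the ``in particular'' clause I would show $\Acal(X,x_0)\subset H(\Z)$. With $f$ as above, Herglotz' theorem (equivalently, Lemma \ref{PD}(2) applied to $f$ itself) gives $f\in B(\Z)$. Now $B(\Z)$ is itself a $\Z$-algebra: it is a subalgebra of $\ell_\infty(\Z)$ because $\hat\mu\cdot\hat\nu=\widehat{\mu*\nu}$, it contains $\ch=\widehat{\delta_0}$, and it is $S$-invariant and conjugation-invariant. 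Therefore $\Acal(X,x_0)=\Acal_f$, being the smallest $\Z$-algebra containing $f$, lies inside $B(\Z)\subset H(\Z)$, so $(X,x_0,T)$ is Hilbert.

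The only point I expect to require real care is the bookkeeping of identifications in the first paragraph — namely, making sure that the space obtained from the abstract Gelfand construction applied to $\Acal_f$ is literally the concrete model $X_f=\cls\{S^nf\}$ in the product topology, which is precisely the content of $\Acal(X_f,f,S)=\Acal_f$ recorded in Section 1. The supplementary remark that $B(\Z)$ is a $\Z$-algebra — so that passing to the algebra generated by $f$ does not leave $B(\Z)$ — is the small extra ingredient needed for the last clause; everything else is a direct citation.
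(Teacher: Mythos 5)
Your proof is correct and takes essentially the same route as the paper, whose own proof is literally ``Combine Lemmas \ref{HR} and \ref{PD}''; your write-up just supplies the Gelfand-duality bookkeeping that the paper leaves implicit. One small precision for the last clause: since $\Acal_f$ is the smallest \emph{closed} $\Z$-algebra containing $f$ (closedness is forced by the identity $\Acal(X,x_0)=\Acal_f$ of Lemma \ref{HR}), it need not lie in $B(\Z)$ itself (indeed Lemma \ref{dense} rules this out in the infinite-dimensional case); the correct statement is that the algebraically generated $\Z$-algebra lies in $B(\Z)$, hence its closure $\Acal_f$ lies in $H(\Z)=\ol{B(\Z)}$, which is all the ``in particular'' clause requires.
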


\begin{proof}
Combine Lemmas \ref{HR} and \ref{PD}.
\end{proof}

\begin{prop}
Every Hilbert system is WAP, i.e. $H(\Z) \subset WAP(\Z)$.
\end{prop}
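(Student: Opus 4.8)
The plan is to reduce the statement to the inclusion $B(\Z) \subset WAP(\Z)$ together with the fact that $WAP(\Z)$ is norm-closed in $\ell_\infty(\Z)$; since $H(\Z)$ is by definition the norm-closure of $B(\Z)$, this gives $H(\Z) \subset WAP(\Z)$, and then invoking Theorem~\ref{WAP-thm} (via $\Acal(X,x_0) \subset H(\Z) \subset WAP(\Z)$) shows that every Hilbert system is WAP. So there are really two things to check: that $B(\Z) \subset WAP(\Z)$, and that $WAP(\Z)$ is closed under taking norm limits.

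First I would verify that $WAP(\Z)$ is norm-closed. This is a standard fact: a norm limit of functions with weakly precompact $S$-orbits again has weakly precompact $S$-orbit, because weak precompactness in a Banach space is preserved under uniform (norm) approximation by weakly precompact sets (an $\ep/3$-type argument using the Eberlein--\v Smulian theorem, or simply the observation that $WAP(\Z)$ is a uniformly closed subalgebra — this is part of the classical Eberlein theory). Alternatively, one can phrase it through the double limit criterion of Proposition~\ref{DLC}: if $f_k \to f$ in norm and each $f_k$ satisfies the double limit condition, then an easy $3\ep$ estimate shows $f$ does too.

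Second, and this is the main point, I would show $B(\Z) \subset WAP(\Z)$; equivalently, every Fourier--Stieltjes transform $\hat\mu$ with $\mu \in M(\T)$ is WAP. The cleanest route is the double limit criterion (Proposition~\ref{DLC}) applied to $G = \Z$: given sequences $m_i, n_j \in \Z$ for which both iterated limits of $\hat\mu(m_i + n_j) = \int_\T e^{i(m_i+n_j)t}\,d\mu(t)$ exist, one wants to show they agree. Pass to subsequences so that $e^{im_it} \to u(t)$ and $e^{in_jt} \to v(t)$ pointwise $\mu$-a.e. (possible after diagonalizing, since the unit circle is compact metric and $\mu$ is $\sigma$-finite); then by dominated convergence the first iterated limit is $\int_\T u(t) v(t)\,d\mu(t)$ and so is the second, so they are equal. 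Hence $\hat\mu \in WAP(\Z)$ by Proposition~\ref{DLC}. (Alternatively, one can note that $\hat\mu$ is a matrix coefficient of the unitary representation $n \mapsto M_{e^{int}}$ on $L^2(|\mu|)$, and every matrix coefficient of a unitary — hence reflexive — representation is WAP by the remarks preceding Theorem~\ref{S-M}, or directly by weak compactness of the unit ball of $L^2(|\mu|)$; but the double limit argument is self-contained.)

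I expect the only real subtlety to be the measure-theoretic bookkeeping in the diagonal-subsequence step of the double limit argument — ensuring that one can extract a single subsequence along which $e^{im_it}$ converges $\mu$-a.e., and that dominated convergence legitimately interchanges limit and integral in the iterated limits. Everything else is routine. Once $B(\Z) \subset WAP(\Z)$ is established and $WAP(\Z)$ is known to be norm-closed, we get $H(\Z) = \overline{B(\Z)} \subset WAP(\Z)$, and therefore any point-transitive system $(X,x_0,T)$ with $\Acal(X,x_0) \subset H(\Z)$ has $\Acal(X,x_0) \subset WAP(\Z)$, which is precisely the statement that $(X,T)$ is WAP.
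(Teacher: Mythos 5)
Your overall route is exactly the paper's: the paper's proof consists of the two remarks that $B(\Z)\subset WAP(\Z)$ is well known and that $WAP(\Z)$ is norm-closed in $\ell_\infty(\Z)$, whence $H(\Z)=\ol{B(\Z)}\subset WAP(\Z)$; your write-up just fills in these standard facts. One detail in your preferred verification of $B(\Z)\subset WAP(\Z)$ does not work as stated: you cannot in general extract a subsequence along which $e^{im_it}\to u(t)$ pointwise $\mu$-a.e. (for $\mu$ Lebesgue measure on $\T$ no subsequence of $e^{int}$ converges a.e., since $e^{int}\to 0$ weakly in $L^2$ while $|e^{int}|\equiv 1$), so the dominated-convergence step has nothing to rest on. The fix is the alternative you already mention: either pass to subsequences converging \emph{weakly} in $L^2(|\mu|)$ (or weak-$*$ in $L^\infty(|\mu|)$) and compute both iterated limits as $\int u\,v\,d\mu$, or observe that $\hat\mu$ is a matrix coefficient of the unitary operator $M_{e^{it}}$ on $L^2(|\mu|)$ and use weak compactness of the unit ball, exactly as in the discussion preceding Theorem \ref{S-M}. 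With that replacement your argument is complete and coincides in substance with the paper's.
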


\begin{proof}
It is well known (and easy to see) that $B(\Z) \subset WAP(\Z)$.
Since the algebra $WAP(\Z)\subset \ell_\infty(\Z)$ is closed we also have 
$H(\Z) \subset WAP(\Z)$.
\end{proof}

\section{A structure theorem for Hilbert systems}

\begin{thm}\label{AK-ext}
Every metrizable recurrent-transitive Hilbert system $(Y,T)$ 
admits an almost 1-1 extension $(\tilde{Y},\tilde{T})$ which is a 
compact group-factor of a recurrent-transitive Hilbert-representable 
system $(X,U)$. 
Thus there exists a commutative diagram
\begin{equation}\label{AK}
\xymatrix
{
(X,U) \ar[dd]_{\pi}\ar[dr]^{\sig}  & \\
 & (\tilde{Y},\tilde{T})\ar[dl]^{\rho}\\
(Y,T) &
}
\end{equation}
with $\sig$ a group-extension and $\rho$ an almost 1-1 extension.
\end{thm}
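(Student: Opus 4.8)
The plan is to build the extension $(X,U)$ directly from a positive-definite function representing $(Y,T)$ and then realise the desired diagram through the dynamics on the associated Hilbert space. Since $(Y,T)$ is Hilbert, $\Acal(Y,y_0) \subset H(\Z)$, so there is a function $f \in H(\Z)$ with $\Acal(Y,y_0) = \Acal_f$; because $f$ lies in the norm closure of $B(\Z)$, I can write $f = \lim_k f_k$ in sup norm with each $f_k \in B(\Z)$ a Fourier--Stieltjes transform, and at the cost of an averaging trick one may take the $f_k$ to be positive definite (replacing $\mu_k$ by a suitable convolution square, or passing to $|\hat\mu_k|^2$-type combinations; the point is that $H(\Z)$ is generated as a closed algebra by positive-definite functions). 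Forming $h = \sum_k 2^{-k} f_k / \|f_k\|$ (or a similar convergent combination chosen so that $h$ is positive definite and $\Acal_h \supseteq \Acal_f$, hence $\Acal_h = \Acal(X,x_0)$ for $X := X_h$), Proposition~\ref{sh=pd} gives that $(X_h, h, S)$ is Hilbert-representable, realised on $\Hcal = Z_U(x_0)$ with $X_h = w\text{-}\cls\{U^n x_0\}$, and there is an obvious factor map $\pi : (X_h,h,S) \to (Y,y_0,T)$ coming from the inclusion $\Acal_f \subseteq \Acal_h$ of algebras. This $\pi$ is the left vertical arrow; it remains to factor it through a group-extension followed by an almost $1$-$1$ map.

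Next I would analyse the structure of $\pi$ using recurrence. By hypothesis $(Y,T)$ is recurrent-transitive, and by Lemma~\ref{Rec-WAP}-type reasoning (every $\Z$-algebra $\Z$-generated by a recurrent function consists of recurrent functions — stated in the introduction) the system $(X_h, S)$ is again recurrent-transitive, hence metric WAP and, by the cited results of Glasner--Weiss and Akin--Auslander--Berg, uniformly rigid and hereditarily almost equicontinuous. So $\La_X = \cls\{U^n : n \in \Z\} \subset \Homeo(X)$ is a Polish monothetic group acting on $X$, with $X_{tr} = \La_X x_0$, and similarly $\La_Y$ acts on $Y$. The map $\pi$ intertwines the $\Z$-actions, hence extends to a continuous homomorphism $\La_X \to \La_Y$; let $K = \ker(\La_X \to \La_Y) \cap \La_X$, viewed as a subgroup of $\Homeo(X)$ commuting with $U$. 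Compactness of $K$ should come from the fact that $K$ acts within the fibre over $y_0$ through $x_0$, i.e. $K x_0 \subseteq \pi^{-1}(y_0) \cap X_{tr}$, and this fibre is a closed (hence compact) subset of $X$ on which $K$ acts freely and transitively — so $K$ inherits compactness from that fibre. Then $(X/K, U)$ is a system, the quotient map $\sig : X \to X/K$ is a group-extension, and there is an induced map $\rho : X/K \to Y$.

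The remaining point is to show $\rho$ is almost $1$-$1$. The fibre $\rho^{-1}(y)$ over a transitive point $y = \la y_0 \in Y_{tr}$ corresponds to the quotient $\pi^{-1}(y)/K$; I would argue that over transitive points this quotient is a single point — essentially because the $\La_X$-action on the fibre $\pi^{-1}(y_0) \cap X_{tr}$ is transitive (by hereditary almost equicontinuity / the $\La_X x_0 = X_{tr}$ identification) and $K$ is exactly the stabiliser-type subgroup collapsing that fibre. Hence $\rho$ is $1$-$1$ over the dense $G_\del$ set $Y_{tr}$, i.e. almost $1$-$1$. Finally $\tilde Y := X/K$ with $\tilde T := U\rest \tilde Y$ completes diagram~\eqref{AK}. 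I expect the \textbf{main obstacle} to be the first paragraph: arranging a single positive-definite $h$ whose generated $\Z$-algebra recovers $\Acal(Y,y_0)$ — one must be careful that passing from the norm-approximating sequence $f_k \in B(\Z)$ to a positive-definite combination does not enlarge or shrink the generated algebra in an uncontrolled way — together with verifying cleanly that the kernel $K$ really is compact and that the collapsing is exactly along transitive fibres. The WAP/uniform-rigidity machinery from Section~1 is what makes the last two points work, so the proof is really a matter of correctly orchestrating those tools rather than proving anything new about them.
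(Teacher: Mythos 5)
Your strategy is essentially the paper's (realize $\Acal(Y,y_0)$ inside the algebra of a Hilbert-representable system built from positive definite functions, pass to the Polish groups $\La_X,\La_Y$, and quotient by $K=\ker(\La_X\to\La_Y)$), but there is a genuine gap at exactly the point you flag, and it is not a mere verification: compactness of $K$ and the identification of the $\rho$-fibres. You assert that $Kx_0\subseteq\pi^{-1}(y_0)\cap X_{tr}$ and that ``this fibre is a closed (hence compact) subset of $X$''. That is unjustified: $\pi^{-1}(y_0)$ is compact, but $Kx_0$ is only the set $\pi^{-1}(y_0)\cap X_{tr}$, and $X_{tr}$ is merely a dense $G_\del$; for the system $X$ you construct, the fibre over $y_0$ can perfectly well contain non-transitive points, in which case $\pi^{-1}(y_0)\cap X_{tr}$ need not be closed in $X$ and no compactness for $K$ follows (nothing group-theoretic saves you: a continuous surjective homomorphism of Polish groups can have a non-compact kernel). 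The same defect undermines almost 1-1-ness of $\rho$: over $y\in Y_{tr}$ the fibre $\rho^{-1}(y)$ is all of $\pi^{-1}(y)/K$, which is a single point only if the \emph{entire} fibre $\pi^{-1}(y)$ is contained in $X_{tr}$, not merely if its transitive part is one $K$-orbit. The missing idea in the paper's proof is a Zorn's lemma reduction performed before any of this: replace $X$ by a subsystem minimal among closed invariant subsets still mapping onto $Y$. For such an $X$ one shows $\pi^{-1}(Y_{tr})\subseteq X_{tr}$ (given $x_1$ over $y_1\in Y_{tr}$, push along $T^{n_i}y_1\to y_0$ to a point $x_0'$ over $y_0$; its orbit closure maps onto $Y$, hence equals $X$ by minimality, so $x_0'$ and then $x_1$ are transitive). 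Then $\pi^{-1}(y_0)\subseteq X_{tr}$ is compact and equals $Kx_0$, so $K$ is compact; fibres over $Y_{tr}$ are exactly $K$-orbits, so $\rho$ is almost 1-1; and minimality also excludes isolated points in $X$, which you need for recurrent-transitivity --- your appeal to Lemma \ref{Rec-WAP} presupposes that your generating function $h$ is recurrent, which does not follow from recurrence of the functions in the subalgebra $\Acal(Y,y_0)$.

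Your first paragraph also has a soft spot, though it is repairable along the paper's lines: a plain convex combination $h=\sum_k 2^{-k}f_k/\|f_k\|$ gives no control on $\Acal_h$, and there is no need to force a single generator at this stage. Instead take the $\ell_2$-direct sum $\Hcal=\oplus_k\Hcal_k$ of the Herglotz representations of the positive definite pieces $f_k$ (each element of $B(\Z)$ is a linear combination of four positive definite functions), with $U=\oplus_k U_k$ and a weighted vector $x_0=\oplus_k c_kx_k$, and let $X$ be the weak closure of $\{U^nx_0\}$; then $c_kf_k(n)=\langle U^nx_0,x_k\rangle$ is a matrix coefficient of a weakly continuous functional, so $\Acal(X,x_0)\supseteq\Acal(\{f_k\})\supseteq\Acal(Y,y_0)$ and the factor map $\pi$ exists, with no uncontrolled passage through a single function (if you want one afterwards, Lemma \ref{HR} supplies it). With that adjustment and the Zorn--minimality step inserted before defining $K$, your argument becomes the paper's proof.
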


\begin{proof}
1.\ 
Let $(Y,T)$ be a metrizable recurrent-transitive Hilbert system. 
Thus, picking a transitive point
$y_0 \in Y$ we have $\Acal(Y,y_0) \subset H(\Z)$. Since every function
in $B(\Z)$ is a linear combination of four positive definite functions,
we can find a sequence $f_n$ of positive definite functions
(with $f_1 \equiv \ch$ and $f_n(0) = \|f_n\|_\infty = 1\ (n=1,2,\dots)$ such that 
$\Acal(\{f_n: n \in \N\})$, the closed translation and conjugation 
invariant algebra generated by the functions $f_n$, contains $\Acal(Y,y_0)$.
For each $n$ there is a separable Hilbert space $\Hcal_n$, a unit vector
$x_n \in \Hcal_n$, and a unitary operator $U_n \in \Ucal(\Hcal_n)$ such that
$f_n(k) = \langle U_n^k x_n,x_n \rangle \ (\forall k \in \Z)$.
Let $\Hcal= \oplus_{n=1}^\infty \Hcal_n$ (an $\ell_2$-sum), 
$x_0 = \oplus_{n=1}^\infty x_n \in \Hcal$,
and $U = \oplus_{n=1}^\infty U_n \in \Ucal(\Hcal)$. Set $X = {\text{weak-}}\cls
\{U^k x_0 : k \in \Z\}$. Then it is easy to verify that the Hilbert-representable system $(X,U)$ admits $(Y,T)$ as a factor, say
$\pi: (X,U) \to (Y,T)$. 

2.\ 
Let $\Xcal$ be the collection of all subsystems $(X',U)$ of $(X,U)$ with
$\pi(X')= Y$. By Zorn's lemma there is a minimal element 
$(Z,U)$ in $\Xcal$.
Clearly $(Z,U)$ is point-transitive Hilbert-representable system
and, for convenience, we now rename it as $(X,U)$. 
Also, let us rename $\Hcal=Z_U(x_0)$, the cyclic space generated by 
$\{U^n x_0 : n \in \Z\}$, where $x_0$ is now some transitive point in $X$.

3.\ 
Let $X_{tr} \subset X$ be the dense $G_\del$ subset of the transitive
points in $X$. Similarly let $Y_{tr} \subset X$ be the dense $G_\del$ subset of 
the transitive points in $Y$. Clearly $\pi(X_{tr}) \subset Y_{tr}$ and we claim
that these sets are equal. 
Suppose $y_1 \in Y_{tr}$. Let $x_1 \in X$
be some point with $\pi(x_1) = y_1$. By assumption $y_1$
is a transitive point and there is a sequence $T^{n_i}y_1 \to y_0$.
We can assume that also $U^{n_i}x_1 \to x'_0$ for some point $x'_0 \in X$
with $\pi(x'_0) = y_0$. It follows that $\pi(\OC_U(x'_0)) = Y$
and therefore, by minimality, $\OC_U(x'_0) =X$. Thus $x'_0$ is a
transitive point and hence so is $x_1$ hence
$y_1 = \pi(x_1) \not\in X_{tr}$. We conclude that indeed $\pi(X_{tr}) = Y_{tr}$.

4.\ 
Next recall that with every recurrent-transitive metrizable WAP 
system $(W,R)$ there is an associated non-discrete Polish monothetic group 
$\La_W = {\text{unif-cls}}\{R^n: n \in \Z\} \subset \Homeo(W)$. 
Moreover, for every
transitive point $w_0 \in W$, the map $g \mapsto g w_0$ is a homeomorphism
from $\La_W$ onto the set $W_{tr}$ of transitive points of $W$,
see \cite{GW2}.
Now using this observations and with notations as in the previous step,
 we note that the surjection $\pi: X_{tr} \to Y_{tr}$ defines a surjective 
Polish group homomorphism $\rho: \La_X \to \La_Y$.
Moreover, we note that ${\rm ker}\,\rho = K$ is a compact subgroup of $\La_X$.

5.\ 
Set $\tilde{Y} = X/K$ and let $\sig : (X,U) \to (\tilde{Y}, \tilde{T})$ denote
the corresponding group homomorphism. 
We now obtained the diagram (\ref{AK}).
\end{proof}

\section{A Polish monothetic group $P$ which is reflexive but not
Hilbert-representable}

We will use Banaszczyk's construction of some families of Polish monothetic
strongly exotic groups \cite{Ba}. For our purposes it suffices to consider
one particular such example which we now proceed to describe.
Let $E=\ell_4(\N)$ be the Banach space comprising the real valued
sequences $u=\{x_j\}_{j=1}^\infty$ with 
$|u|:=(\sum_{j=1}^\infty x_j^4)^{1/4}  < \infty$.

Let $\{e_n\}_{n=1}^\infty$ be the standard basis of unit vectors and choose 
a dense sequence $\{a_n\}_{n=1}^\infty \subset E$ with the 
property $a_n \in \spann\{e_j\}_{j < n}$ for every $n \ge 1$.
Let $\Ga$ be the subgroup of $E$ generated by the sequence 
$\{e_n + a_n\}_{n=1}^\infty$.
It can be easily checked that  
$|\ga| \ge 1$ for every $0 \ne \ga \in \Ga$;
and that, on the other hand, $E = \Ga + 2 B_1(0)$,
where $B_1(0)$ denotes the unit ball in $E$. 
Set $P = E/\Ga$ and equip $P$ with its quotient topology.
It follows that $P$ is a Polish topological group. 
By Theorem 5.1 and Remark 5.2 in \cite{Ba} $P$ is strongly
exotic and monothetic. In Remark 5.2 Banaszczyk leaves the
proof of the fact that $P$ is monothetic as an exercise;
for completeness we provide a proof of this fact at the end of this section,
Proposition \ref{Mo}.

\begin{thm}\label{Ban}
The Polish monothetic group
$P = \ell_4(\N)/\Ga$ is reflexively-representable.
\end{thm}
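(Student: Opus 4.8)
The plan is to exhibit a reflexive Banach space on which $P = \ell_4(\N)/\Ga$ acts by isometries, faithfully and topologically isomorphically onto its image with the strong operator topology. By Theorem \ref{S-M}(1) it suffices to produce enough $WAP(P)$ functions to separate points and closed sets in $P$, and the natural source of such functions is the Banach space $E = \ell_4(\N)$ itself, following the Megrelishvili idea cited in the introduction. First I would recall that $E = L_4$-type spaces (here $\ell_4(\N)$) carry a natural almost transitive or at least rich isometric structure, but more to the point: translations by elements of $E$ act on $E$ by isometric affine maps, and the relevant representation descends to $P$ because we quotient by $\Ga$. Concretely, consider the metric $d$ on $P$ induced by the quotient norm $|u + \Ga|_P = \dist(u, \Ga)$; since $|\ga| \ge 1$ for all $0 \ne \ga \in \Ga$, this descends to a genuine invariant metric generating the Polish topology on $P$.

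The key step is then to express $d$ (or a separating family of functions built from it) as matrix coefficients of a reflexive representation, i.e.\ to show these functions lie in $WAP(P)$. The cleanest route: for the group $E = \ell_4$ acting on itself by translation, the function $u \mapsto |u|^4 = \sum_j x_j^4$ is a polynomial of degree $4$ in the "coordinates", and translation-invariance considerations show that functions of the form $u \mapsto \phi(u + v_0)$ for suitable test functionals, restricted to bounded sets, have weakly precompact orbits precisely because $\ell_4$ is reflexive and degree-$4$ polynomial maps on reflexive spaces behave well under weak limits — this is exactly the mechanism by which Megrelishvili showed $L_{2n}[0,1]$ is reflexively representable. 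I would invoke the double limit criterion, Proposition \ref{DLC}: given sequences $g_m = u_m + \Ga$, $h_n = v_n + \Ga$ in $P$, one must check that the two iterated limits of $f(g_m h_n)$ agree whenever both exist, where $f$ is built from the $\ell_4$-norm; because $E$ is reflexive one may pass to weakly convergent subsequences of $\{u_m\}$ and $\{v_n\}$ (after reducing to a bounded region using the quotient structure), and then the degree-$4$ polynomial $f$ is weakly sequentially continuous enough on bounded sets in the relevant variables to force $a = b$. The fact that $\Ga + 2B_1(0) = E$ lets one always represent points of $P$ by bounded representatives, which is what makes the reduction to bounded sets legitimate.

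The main obstacle, I expect, is precisely this last point: verifying that the candidate functions are genuinely $WAP(P)$, i.e.\ making the weak-continuity / double-limit argument rigorous, since a quartic form on $\ell_4$ is \emph{not} weakly sequentially continuous on the whole space (the unit vectors $e_n \to 0$ weakly but $|e_n|^4 = 1$), so one must be careful that the particular combinations arising as matrix coefficients of the translation action — which involve differences and are evaluated only along the directions relevant to separating points of $P$ — do have the required continuity. One resolves this by working not with $|u|^4$ directly but with the reflexive representation of $E$ on (a space built from) $L_4$ whose existence is guaranteed by Theorem \ref{S-M}(2) applied to the Banach space $E$ viewed as an abelian topological group: every $f \in WAP(E)$ is a matrix coefficient of a reflexive representation, and conversely the separating functions one needs on $E$ (hence on $P$ after checking $\Ga$-invariance) are furnished by the known result of Megrelishvili for $L_{2n}$. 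Finally I would check that the resulting representation of $P$ is \emph{faithful} and a topological embedding: faithfulness follows because the separating functions descend to $P$ and separate its points (distinct cosets $u + \Ga \ne v + \Ga$ have $\dist(u - v, \Ga) > 0$, and one arranges a $WAP$ function detecting this), and the embedding property follows from separating points from closed sets together with the Polish (hence Baire) structure, via the standard argument that a continuous injective homomorphism of Polish groups witnessed by a separating family of matrix coefficients is automatically a topological isomorphism onto its image.
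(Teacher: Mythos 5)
Your high-level strategy matches the paper's: produce a family of $WAP(P)$ functions separating points and closed sets and invoke Theorem \ref{S-M}(1), verifying the WAP property via the double limit criterion and a Megrelishvili-style quartic expansion with weak limits in $\ell_4$, $\ell_2$, $\ell_{4/3}$. But there is a genuine gap at exactly the point you flag as "the main obstacle," and your proposed resolution does not close it. Invoking Theorem \ref{S-M}(2) and Megrelishvili's result for $L_{2n}$ gives separating WAP functions on $E=\ell_4(\N)$, but those functions have no reason to be $\Gamma$-invariant, and "checking $\Gamma$-invariance" is not a formality one can perform afterwards: since $P$ is strongly exotic, the usual sources of functions on $P$ (matrix coefficients of unitary representations) are trivial, so the existence of \emph{any} nonconstant $\Gamma$-invariant WAP function, let alone a separating family, is precisely what has to be proved. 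Your argument as written is circular at this step --- it derives the needed functions on $P$ from a representability statement about $E$ that simply does not descend to the quotient.

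The missing idea in the paper's proof is the explicit construction $F(u+\Gamma)=\min\{\|u+\Gamma\|,\tfrac{1}{10}\}$ (and its translates $F_v$), where $\|u+\Gamma\|=\inf_{\gamma\in\Gamma}|u+\gamma|$, together with the observation that $|\gamma|\ge 1$ for $0\ne\gamma\in\Gamma$ forces $\|u+\Gamma\|=|u|$ whenever $|u|\le\tfrac12$. In the double limit check one may assume the limit value is below $\tfrac{1}{10}$ (otherwise both iterated limits equal $\tfrac{1}{10}$), and then the bounds $\|u_m+v_n+\Gamma\|\le\tfrac{1}{10}-\delta$ plus the triangle inequality place all relevant differences inside a ball of radius well under $\tfrac12$, so the quotient norm can be replaced by the $\ell_4$-norm and the quartic expansion applies, with the symmetric limit expression $U^4+4\langle u^3,v\rangle+6\langle u^2,v^2\rangle+4\langle u,v^3\rangle+V^4$ giving $a=b$. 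Note also that your stated mechanism for "reduction to bounded sets" --- choosing representatives via $E=\Gamma+2B_1(0)$ --- is not sufficient: on a ball of radius $2$ the quotient norm does not coincide with $|\cdot|$, and the quartic form is not weakly sequentially continuous on bounded sets (your own example $e_n\rightharpoonup 0$, $|e_n|^4=1$); it is the truncation at $\tfrac{1}{10}$, not boundedness of representatives, that makes the local identification of the two norms and hence the Megrelishvili computation legitimate.
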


\begin{proof}
We will follow the ideas of Megrelishvili's proof that the group $L_4([0,1])$
is reflexively-representable (\cite{Me-00}, Lemmas 3.3 and 3.4).
Note that Megrelishvili's proof works almost verbatim to 
show that $\ell_4(\N)$ is reflexively-representable.

We first define an invariant metric on $P$ by
$$
d(u+\Ga,0) = \|u+\Ga\| = \inf_{\ga \in \Ga} |u + \ga|,\qquad u \in E.
$$
Note that for $u \in E$ we have 
\begin{equation}\label{=}
|u| \le \frac12
\imp \|u + \Ga\| = |u|.
\end{equation}

Next define the following continuous function on $P$:
$$
F(u +\Ga) = \min\{\|u +\Ga\|, \frac{1}{10}\}.
$$
We will show that $F$ satisfies Grothendieck's {\em double limit condition},
i.e. we have to show that
given two sequences 
$\{u_m + \Ga\}_{m=1}^\infty, \{v_n + \Ga\}_{n=1}^\infty$ 
in $P$, if the limits
\begin{equation}\label{DL}
a=\lim_m\lim_n F(u_m + v_n + \Ga) \quad {\text{and}} \quad
b=\lim_n\lim_m F(u_m + v_n + \Ga),
\end{equation}
exist, then necessarily $a =b$.

Of course when $a=b = \frac{1}{10}$ we are done; so we now
assume that  $a < \frac{1}{10}$. Then for some $\del >0$ eventually 
$$
a_m := \lim_{n \to \infty} F(u_m +v_n +\Ga) =  
\lim_{n \to \infty} \|u_m + v_n + \Ga\| < \frac{1}{10}-2\del. 
$$
Thus for some $m_0$ we have $a_m  \le \frac{1}{10}-2\del$
for every $m \ge m_0$; and for a fixed $m \ge m_0$
there is an $n(m)$ such that for $n \ge n(m)$ we have
$\|u_m + v_n + \Ga\| \le \frac{1}{10}-\del$.

Now for $n \ge \max\{n(m), n(m_0)\}$ we have
$$
\|u_m + v_n + \Ga\| \le \frac{1}{10}-\del\quad {\text{and}} 
\quad  \|u_{m_0} + v_n + \Ga\| \le \frac{1}{10}-\del,
$$
hence $ \|u_m - u_{m_0} + \Ga\| \le \frac{2}{10}$.
Thus by an appropriate choice of representatives we can now assume that
$\|u_m - u_{m_0} + \Ga\| = |u_m - u_{m_0}| \le \frac{2}{10}$ for
all $m\ge m_0$. 
By the same token, for $n \ge n(m_0)$ we can assume that also
$\|u_{m_0} + v_n + \Ga\| = |u_{m_0} + v_n| \le \frac{2}{10} -\del$.
This means that for sufficiently large $m$ and $n$ all the vectors
$u_m$ and $-v_n$ lie in a ball of radius $\frac{2}{10}$ around $u_{m_0}$.
By the triangle inequality and (\ref{=})
\begin{equation*}
\|u_m + v_n +\Ga\| = |u_m + v_n| \le \frac{4}{10}-\del,
\end{equation*}
for all $m \ge m_0$ and $n \ge n(m_0)$.

Now in $E$ we have:
\begin{gather*}
|u_m + v_n|^4 
= \sum_{i=1}^\infty u_m(i)^4 + 4u_m(i)^3v_n(i) + 6u_m(i)^2v^2_n(i)
+ 4u_m(i)v_n(i)^3 + v_n(i)^4.
\end{gather*}
As all these vectors lie in a ball we can assume, by passing to 
subsequences, that 
\begin{align*}
& \lim_{n \to \infty} v_n = v \quad {\text{weakly in}}  \ \ell_4\\ 
& \lim_{n \to \infty}v^2_n = v^2 \quad {\text{weakly in}} \ \ell_2\\ 
& \lim_{n \to \infty}v^3_n = v^3 \quad {\text{weakly in}} \  \ell_{\frac{4}{3}},
\end{align*}
whence
$$
a_m = |u_m|^4 + 4\langle u_m^3,v \rangle + 6 \langle u_m^2,v^2 \rangle
+4 \langle u_m,v^3 \rangle + V^4,
$$
where 
$V = \lim_{n \to \infty}|v_n| 
$.

Again we can assume that also
\begin{align*}
& \lim_{m \to \infty} u_m = u \quad {\text{weakly in}}  \ \ell_4\\ 
& \lim_{m \to \infty}u^2_n = u^2 \quad {\text{weakly in}} \ \ell_2\\ 
& \lim_{m \to \infty}u^3_n = u^3 \quad {\text{weakly in}} \  \ell_{\frac{4}{3}}.
\end{align*}
We now get
$$
a = \lim_{m\to\infty}a_m = 
U^4 + 4\langle u^3,v \rangle + 6 \langle u^2,v^2 \rangle
+4 \langle u,v^3 \rangle + V^4,
$$
where 
$U = \lim_{n \to \infty}|u_n|$.

Computing the double limit the other way we have similarly
$$
b =
U^4 + 4\langle u^3,v \rangle + 6 \langle u^2,v^2 \rangle
+4 \langle u,v^3 \rangle + V^4,
$$
whence $a =b$, as required.

We have thus shown that the function $F$ satisfies the double limit condition.
Now observe that, by the same computations, 
for any fixed $v \in E$ the function $F_v(u) = F(u - v)$
also satisfies the double limit property. 
By the double limit criterion \ref{DLC} we conclude that
the family $F_v, v \in E$, is contained in $WAP(P)$.
As clearly the collection $\{F_v\}_{v \in E}$ generates
the topology on $P$ it follows that
this family separates points and closed sets on $P$ 
and we conclude from Theorem \ref{S-M}.(1), as in \cite{Me-00}, 
that the group $P$ is indeed reflexively-representable. 
\end{proof}

\begin{thm}\label{WAPH-1}
There is a Polish monothetic group $P$ which admits
a faithful representation on a reflexive Banach space
but is not Hilbert-representable (in fact it is strongly exotic).
\end{thm}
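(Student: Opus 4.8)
The natural candidate is the group $P = \ell_4(\N)/\Ga$ already constructed in this section, and the plan is simply to assemble the relevant facts about it. First I would record that $P$ is a Polish topological group: indeed $\Ga$ is discrete, since $|\ga|\ge 1$ for every $0\ne\ga\in\Ga$, hence closed in the separable Banach space $E=\ell_4(\N)$, so the quotient $E/\Ga$ is Polish. Moreover $P$ is monothetic, by Banaszczyk \cite{Ba}, Remark 5.2 (see also Proposition \ref{Mo}).

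Next, the reflexive side. Theorem \ref{Ban} produces a family $\{F_v : v\in E\}\subset WAP(P)$ which generates the topology of $P$ and therefore separates points from closed sets on $P$; by Theorem \ref{S-M}.(1) this yields a \emph{faithful} representation of $P$ on a reflexive Banach space. Hence $P$ is reflexively-representable.

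Finally, the negative side. By Theorem 5.1 of \cite{Ba}, $P$ is strongly exotic, i.e. it admits no nontrivial weakly continuous representation on a Hilbert space; this already establishes the parenthetical assertion. Suppose, towards a contradiction, that $P$ were Hilbert-representable, so that there is a topological isomorphism $\tau$ of $P$ onto a subgroup of $\Ucal(\Hcal)$ carrying the strong operator topology. Being continuous for the strong operator topology, $\tau$ is a fortiori continuous for the weak operator topology, so $\tau$ is a weakly continuous unitary representation of $P$; being injective with $P$ nontrivial, $\tau$ is nontrivial — contradicting strong exoticity. Therefore $P$ is not Hilbert-representable, which completes the proof.

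I do not expect a genuine obstacle here: every ingredient is either Theorem \ref{Ban}, Banaszczyk's construction, or the elementary passage from strong-operator to weak-operator continuity. The one step that deserves an explicit sentence is the last implication, ``strongly exotic $\Longrightarrow$ not Hilbert-representable'', precisely because the two notions in the definitions are phrased with different (weak versus strong) operator topologies; one must observe that a \emph{topological isomorphism} onto a subgroup of $(\Ucal(\Hcal),\text{SOT})$ is in particular a faithful, hence nontrivial, weakly continuous Hilbert-space representation, which strong exoticity forbids.
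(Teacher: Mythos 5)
Your proposal is correct and follows exactly the paper's route: take Banaszczyk's group $P=\ell_4(\N)/\Ga$, invoke Theorem \ref{Ban} for reflexive representability and Banaszczyk's Theorem 5.1 for strong exoticity, and conclude. Your only addition is to spell out the (true and worth stating) implication that a topological isomorphism into $(\Ucal(\Hcal),\mathrm{SOT})$ is in particular a nontrivial weakly continuous unitary representation, which the paper leaves implicit.
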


\begin{proof}
Let $P= \ell_4(\N)/\Ga$ be the strongly exotic Polish monothetic group 
described by Banaszczyk \cite{Ba}. By Theorem \ref{Ban}
$P$ is reflexively-representable and, being strongly exotic, it is not 
Hilbert-representable.
\end{proof}

\begin{prop}\label{Mo}
The group $P$ is monothetic.
\end{prop}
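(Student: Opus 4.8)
The plan is to show that the quotient group $P = E/\Ga$, where $E = \ell_4(\N)$ and $\Ga$ is generated by $\{e_n + a_n\}$, has a dense cyclic subgroup, i.e. to exhibit a single element $u + \Ga \in P$ whose integer powers $\{ku + \Ga : k \in \Z\}$ are dense in $P$. Since the quotient topology on $P$ is metrized by $d(u+\Ga, v+\Ga) = \|u - v + \Ga\| = \inf_{\ga\in\Ga}|u - v + \ga|$, and since by (\ref{=}) small cosets are faithfully represented, it suffices to find $u \in E$ such that for every $w \in E$ and every $\ep > 0$ there is an integer $k$ with $\|ku - w + \Ga\| < \ep$; equivalently, $ku$ lies within $\ep$ (in the $\ell_4$-norm, after subtracting a suitable element of $\Ga$) of $w$.

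First I would fix a countable dense set; the natural candidate is the given sequence $\{a_n\}$ with $a_n \in \spann\{e_j\}_{j<n}$, which is dense in $E$ by construction. The key mechanism is that the generator $e_n + a_n \in \Ga$ is the identity in $P$, so in $P$ we have $e_n \equiv -a_n$; thus adding a large multiple of $e_n$ to a vector costs nothing in $P$ but in $E$ shifts us in the $e_n$-direction, which is "new" coordinates not seen by $a_1, \dots, a_n$. The idea then is to build $u$ as a rapidly convergent series $u = \sum_{n} c_n e_{k_n}$ along a sparse increasing sequence of indices $k_n$ and with coefficients $c_n > 0$ chosen so that: (i) $\sum c_n^4 < \infty$ so $u \in E$; (ii) the blocks are so far apart and so small that a single carefully chosen integer $k$ can, on the $n$-th block, make $k c_n$ approximate (modulo the lattice generated by using $e_{k_n} \equiv -a_{k_n}$) essentially any target, while the contributions of the other blocks to the relevant coordinates remain negligible. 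This is the standard "independent blocks / Baire-category or direct diagonal" construction of a topologically cyclic element in an infinite-dimensional monothetic group.

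Concretely, I would proceed by a back-and-forth / diagonal enumeration: list the dense set $\{a_m\}$ with infinite repetition, and recursively choose indices $k_1 < k_2 < \cdots$ and integers $N_1 < N_2 < \cdots$ so that at stage $n$, targeting $a_{m(n)}$, one arranges that the integer $k = N_n$ satisfies $\|N_n u - a_{m(n)} + \Ga\| < 1/n$. The freedom comes from the fact that $e_{k_n} + a_{k_n} \in \Ga$, so subtracting an integer multiple of this generator lets us convert a large integer coefficient in front of $e_{k_n}$ into a bounded vector supported on lower-index coordinates, at the price of distortion that is controlled because the $c_n$ decay fast and the indices $k_n$ grow fast. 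One needs: the tail $\sum_{j > n} c_j^4$ to be $\ll (1/n)^4$; the coefficients $c_n$ of the form $1/q_n$ for integers $q_n$ so that $N_n c_n$ can be made an integer (hence $N_n c_n e_{k_n} \equiv -N_n c_n a_{k_n} \pmod \Ga$, a genuine element of $E$), with $N_{n} c_{n'}$ for $n' \neq n$ either an integer (so it disappears mod $\Ga$ in the same way) or negligibly small; and the leftover $-N_n c_n a_{k_n}$ plus accumulated errors from earlier blocks to approximate $a_{m(n)}$ — which, since $a_{k_n}$ ranges over a dense set, can be arranged by also choosing which $a_{k_n}$ to place.

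The main obstacle I expect is the bookkeeping that makes all these requirements simultaneously consistent: one is juggling (a) membership $u \in \ell_4$, (b) the arithmetic condition that $N_n$ kills all blocks but one modulo $\Ga$ (or leaves only small remainders), (c) density of the resulting orbit, and (d) the interaction between the "error terms" $-N_n c_n a_{k_n}$ living in low coordinates and the targets $a_{m(n)}$. The cleanest route is probably to choose the supports $k_n$ inductively so that $a_{k_n}$ itself is chosen to be (approximately) the current target $a_{m(n)}$ scaled appropriately — i.e., exploit that $\{a_n\}$ is dense and that we get to pick which index $k_n$ to use — thereby decoupling (c) from the rest. With that decoupling, (a) and (b) reduce to a routine choice of fast-growing $q_n = $ denominators and fast-growing $N_n$ with $q_{n} \mid N_n$ and $q_{n'} \mid N_n$ for $n' < n$ while $N_n / q_{n'}$-type cross terms for $n' > n$ are forced small by the decay of $c_{n'}$. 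I would present the construction, verify $u \in E$ and $|\ga| \geq 1$ compatibility is irrelevant here, and then check the density estimate $\|N_n u - a_{m(n)} + \Ga\| < 1/n$ block by block, concluding that $\overline{\{k(u+\Ga) : k \in \Z\}} = P$.
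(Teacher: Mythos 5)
The weak point is your treatment of the cross terms, that is, of the blocks $n'\neq n$ under multiplication by $N_n$. You assert that if $N_nc_{n'}$ is an integer then that block ``disappears mod $\Ga$'' (and, in your item (b), that ``$N_n$ kills all blocks but one modulo $\Ga$''). This is not true: making $N_nc_{n'}=M$ an integer only lets you trade $Me_{k_{n'}}$ for $-Ma_{k_{n'}}$ modulo $\Ga$, and $-Ma_{k_{n'}}$ is a large integer multiple of a fixed vector whose coset modulo $\Ga$ is completely uncontrolled -- it is neither $0$ nor small, and its distance to $\Ga$ depends on Diophantine properties of $a_{k_{n'}}$ relative to the triangular basis $\{e_j+a_j\}_{j<k_{n'}}$ over which you have no control. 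Synchronizing all these residues for all $n'<n$ simultaneously is precisely the crux of the proof (it is a simultaneous, Kronecker-type recurrence problem), and the divisibility conditions on $N_n$ by themselves contribute nothing toward it. So, as written, the construction does not close.

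Your proposal does contain, in passing, the correct repair (``the leftover $\dots$ plus accumulated errors from earlier blocks $\dots$ can be arranged by also choosing which $a_{k_n}$ to place''), but you never commit to it nor verify that it is consistent; spelled out, it works, and the order of the choices is what saves you. At stage $n$ first fix $q_n$ (so large that $N_jc_n$ is negligible for every $j<n$ and that $\sum_n c_n^4<\infty$), then put $N_n=\mathrm{lcm}(q_1,\dots,q_n)$, so that modulo $\Ga$ one has $N_nu\equiv-\sum_{n'\le n}(N_n/q_{n'})a_{k_{n'}}$ plus a tail that later stages keep small; only \emph{now} choose the index $k_n$ so that $a_{k_n}$ approximates $-\frac{q_n}{N_n}\bigl(a_{m(n)}+\sum_{n'<n}(N_n/q_{n'})a_{k_{n'}}\bigr)$ to within $q_n/(nN_n)$, which is possible because every quantity in this expression is already determined at stage $n$ and the sequence $\{a_j\}$ is dense with admissible indices of arbitrarily large size. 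This yields $\|N_nu-a_{m(n)}+\Ga\|<2/n$ and hence density of $\{ku+\Ga\}$. Thus the earlier blocks are \emph{compensated at the current stage}, where the multipliers $N_n/q_{n'}$ are known, rather than ``killed'' in advance. For comparison, the paper handles the same difficulty by passing to the finite-dimensional quotients $E_k/\Ga_k$ (which are tori), applying Kronecker's theorem there together with a stability estimate, and taking the limit of a Cauchy sequence of partial generators; your (repaired) construction is more hands-on and avoids Kronecker's theorem, but only once the bookkeeping above is made explicit.
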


\begin{proof}
Let $\pi_k: \ell_4(\N) \to \ell^k_4:=E_k$ be the projection map
onto the first $k$ coordinates. Let $\Ga_k = \pi_k(\Ga)$ and $P_k = E_k/\Ga_k$.
Note that for each $k \ge 1$ the group $P_k$ is a $k$-dimensional torus
and by Kronecker's theorem the vectors $z=(z_1,\dots,z_k) \in E_k$ such that
$\{nz +\Ga_k: n \in \Z\}$ is dense in $P_k$, form a dense subset of $E_k$.
We will construct, by induction, a Cauchy sequence $\{z^{(k)}: k \in \N\}$
in $\ell_4(\N)$ whose limit $x =(x_1,x_2,\dots) \in \ell_4(\N)$ will have
the property that $\{nx +\Ga: n \in \Z\}$ is a dense subgroup of $P$.

Suppose we have already chosen $w^{(k)} =(z^{(k)}_1,z^{(k)}_2,\dots,z^{(k)}_k) 
\in E_k$  with the property that for some positive integer $N_k$
\begin{equation}\label{Nk}
\{nw^{(k)} +\Ga_k: |n| \le N_k\},
\end{equation} 
is $\frac{1}{k}$-dense in $P_k$. 
We set 
$$
z^{(k)}=(z^{(k)}_1,z^{(k)}_2,\dots,z^{(k)}_k,0,0,\dots)\in \ell_4(\N).
$$
There exists an $\ep_k >0$ such that the condition (\ref{Nk})
(with $\pi_k(z)$ instead of $w^{(k)}$)
is satisfied for every $z$ in an $\ep_k$-ball around $z^{(k)}$. 
As we have seen there is then a vector 
$w^{(k+1)} =(z^{(k+1)}_1,z^{(k+1)}_2,\dots,z^{(k+1)}_{k+1}) 
\in E_{k+1}$ and a positive integer $N_{k+1}$ such that the set
$\{nw^{(k+1)} +\Ga_{k+1}: |n| \le N_{k+1}\}$ 
is $\frac{1}{k+1}$-dense in $P_{k+1}$,
and such that for 
$z^{(k+1)}=(z^{(k+1)}_1,z^{(k+1)}_2,\dots,z^{(k+1)}_k,0,0,\dots)\in \ell_4(\N)$ 
we have,
$|z^{(k)} - z^{(k+1)}| < \min\{2^{-k},\ep_k\}$. 
This completes the inductive construction and clearly the unique limit point
$x = \lim_{k \to \infty} z^{(k)}$ is the required one.
\end{proof}

\section{A WAP recurrent-transitive system which is not Hilbert}

\begin{thm}\label{WAPH-2}
The Banaszczyk group $P=\ell_4/\Ga$ admits no nontrivial Hilbert system. 
Thus any nontrivial WAP action of $P$ provides an example of 
a recurrent-transitive WAP system which is not Hilbert.
\end{thm}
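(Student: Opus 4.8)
The plan is to exploit strong exoticity of $P$ to show that the algebra $H(P)$ collapses to the constant functions; once this is established the non-existence of nontrivial Hilbert $P$-systems is immediate. Here, just as for $\Z$, a point-transitive $P$-system $(X,x_0,T)$ is \emph{Hilbert} when $\Acal(X,x_0)=j_{x_0}(C(X))\subset H(P)$, and $H(P)=\ol{B(P)}$ is the norm closure in $LUC(P)$ of $B(P)$, which — exactly as for $B(\Z)$ — is the linear span of the continuous positive definite functions on $P$.

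So the first task is to prove that every continuous positive definite function $\phi:P\to\C$ is constant. Given such a $\phi$, form its GNS triple $(\pi_\phi,\Hcal_\phi,\xi_\phi)$, so that $\phi(p)=\langle\pi_\phi(p)\xi_\phi,\xi_\phi\rangle$ and the vectors $\pi_\phi(p)\xi_\phi$ span a dense subspace of $\Hcal_\phi$. The matrix coefficients of $\pi_\phi$ between such vectors are the translates $p\mapsto\phi(h^{-1}pg)$ of $\phi$, hence continuous, and since $\|\pi_\phi(p)\|=1$ for all $p$ a uniform approximation over this dense span upgrades this to weak continuity of $\pi_\phi$. As $P$ is strongly exotic, $\pi_\phi$ must be trivial, and therefore $\phi\equiv\|\xi_\phi\|^2$ is constant. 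Hence $B(P)=H(P)=\C\ch$. Now if $(X,x_0,T)$ is a point-transitive Hilbert $P$-system then $j_{x_0}F$ is constant for every $F\in C(X)$, which says that $F$ is constant on the orbit of $x_0$ and hence, by density, on all of $X$; thus $C(X)=\C\ch$ and $X$ is a single point. This proves the first assertion.

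For the second assertion, note that $P=\ell_4(\N)/\Ga$ is connected, being a continuous image of the connected space $\ell_4(\N)$. Consequently, in any point-transitive $P$-system, if there is an isolated point then — by the remark recalled in Section 1 — the set of isolated points equals the orbit of some transitive point $x_1$; but then $\{x_1\}$ is open, so this orbit carries the discrete topology, while also being connected as a continuous image of $P$, hence a single point, and so $X$ itself reduces to a point. Therefore every nontrivial point-transitive $P$-system is automatically recurrent-transitive. Finally, nontrivial WAP $P$-systems do exist: for instance the orbit closure $X_F$ of the nonconstant function $F\in WAP(P)$ constructed in the proof of Theorem \ref{Ban} is a nontrivial metrizable point-transitive WAP $P$-system, and by the first part it cannot be Hilbert.

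The only delicate points are the two normalizations used at the outset — identifying $B(P)$ (hence $H(P)$) as the closed span of the continuous positive definite functions, i.e.\ fixing the correct notion of a Hilbert $P$-system, and verifying that the GNS representation attached to a continuous positive definite function is weakly continuous. Both are routine, and everything after them is formal, so I do not expect a genuine obstacle here beyond aligning the definitions with those used earlier in the paper.
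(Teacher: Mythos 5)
Your argument proves a different and much weaker statement than the one the paper actually needs. In this paper ``Hilbert'' is a notion for $\Z$-systems: $(Y,y_0,T)$ is Hilbert when $\Acal(Y,y_0)=\{\,n\mapsto F(T^ny_0):F\in C(Y)\,\}\subset H(\Z)=\ol{B(\Z)}$, the norm closure in $\ell_\infty(\Z)$ of the Fourier--Stieltjes transforms of measures on $\T$. Accordingly, ``$P$ admits no nontrivial Hilbert system'' means: for no nontrivial point-transitive $P$-system $(Y,y_0,P)$, with $T\in P$ a topological generator, is the associated $\Z$-system $(Y,y_0,T)$ Hilbert in this sense (this is exactly how the proof of Corollary \ref{rf} uses the theorem, to conclude $f_v=F_v\rest\Z\notin H(\Z)$). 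You instead define a Hilbert $P$-system via $H(P)=\ol{B(P)}\subset LUC(P)$ and observe that (strong) exoticity forces every continuous positive definite function on $P$ to be constant, so $H(P)=\C\ch$. That observation is essentially immediate, but it says nothing about whether the restricted functions $n\mapsto F(T^ny_0)$ lie in $H(\Z)$: the algebra $H(\Z)$ is huge (it contains all Fourier--Stieltjes transforms on $\Z$), and nothing about it collapses because $P$ is exotic. So your version of the theorem cannot feed Corollary \ref{rf}, and the real content of the statement is untouched.

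The genuine difficulty, which your proposal does not address, is to pass from the hypothesis $\Acal(Y,y_0)\subset H(\Z)$ --- functions that are only \emph{norm limits} of matrix coefficients, hence not directly coefficients of any unitary representation --- to a nontrivial weakly continuous Hilbert representation of $P$, contradicting strong exoticity. The paper does this via the structure theorem (Theorem \ref{AK-ext}): realize $(Y,T)$ as a factor of a recurrent-transitive Hilbert-representable system $(X,U)$, pass to the Polish groups $\La_X\to\La_Y$ with compact kernel $K$, and then --- the key step --- use the Gaussian dynamical system attached to the positive definite function $f(n)=\langle U^nx_0,x_0\rangle$ to realize $\La_X$ inside $\Aut(\Om,\mu)$, so that the quotient by $K$ still acts unitarily, namely on $L_2(\Om/K,\mu_K)$; since $P\to\La_Y\cong\La_X/K$ is continuous with dense image, this yields a nontrivial Hilbert representation of $P$, a contradiction. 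Without some substitute for this transfer argument your proof does not establish the theorem as stated and used. (Your remarks on recurrence via connectedness of $P$ and on the existence of nontrivial WAP $P$-actions are fine, but they concern only the easy part of the statement.)
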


\begin{proof}
1.\  
Let $T\in P$ be a generator of a dense cyclic subgroup in $P$.
Suppose that $P$ admits a nontrivial point-transitive Hilbert 
system $(Y,y_0,P)$ and consider the $\Z$-system $(Y,y_0,T)$.
Since $P$ is exotic it is in particular MAP 
(= minimally almost periodic, i.e. it admits no nontrivial
unitary finite dimensional representations) and it follows 
that $y_0$ is a non-periodic, recurrent point of $(Y,T)$.

Let $(X,x_0,U)$ with $x_0$ a unit vector in a Hilbert space $\Hcal$,
$U \in \Ucal(\Hcal)$ a unitary operator on $\Hcal$,
and $X = w{\text{-}}\cls \{U^nx_0 : n \in \Z\}$, be the extension provided by 
Step 5 of Theorem \ref{AK-ext} (diagram  (\ref{AK}) above).

Let $G_X = \cls \{U^n : n \in \Z\}$, where the closure is taken in the strong
operator topology on $\Ucal(\Hcal)$. It is not hard to see that
there is a canonical topological isomorphism between the groups $\La_X$
and  $G_X$.
There are also canonical
topological isomorphisms $\La_X/K \cong \La_Y \cong \La_{\tilde{Y}}$.
Note that our assumption that $(Y,T)$ extends to a $P$-system means that
the map $n \mapsto T^n$ extends to a continuous homomorphism
$P \to \La_Y$ with a dense image (see Step 4 of the proof of Theorem 
\ref{AK-ext}).

By Proposition \ref{sh=pd} we have an isomorphism
$(X,U)\cong (X_f,S)$ where $f$ is the positive definite function
$f(n) = \langle U^n x_0,x_0 \rangle$. Now let 
$(\Om, \Fcal, \mu, R)$ be the Gauss dynamical system which 
corresponds to $f$; thus there is a $\Z$-sequence $\{\xi_n\}_{n \in \Z}$ 
of real valued random variables defined on $\Om$ with joint
Gauss distribution such that $\xi_n = \xi_0 \circ R^n$
with $\E(\xi_n\xi_m) = f(n -m)$, and such that
the $\sig$-algebra generated by the sequence $\{\xi_n\}_{n \in \Z}$
is $\Fcal$. (For more details on Gauss dynamical systems see
e.g. \cite{CFS} and \cite{LPT}.)

Let $\Hcal_0 \subset L_2(\Om,\mu)$ be the closed subspace
spanned by $\{\xi_n\}_{n \in \Z}$ (the first Wiener chaos). Then
$\Hcal_0$ is isomorphic, as a Hilbert space, to $\Hcal$
(which is the $U$-cyclic space spanned by $x_0$).  In particular
we can think of $\La_X$ as a subgroup of $\Ucal_0(\Hcal_0)$ where it 
commutes with $U_R$, the Koopman operator on $L_2(\Om,\mu)$
which corresponds to $R$.
It follows that $\La_X$ can be realized a a group of measure
preserving transformations (i.e. a subgroup of $\Aut(\Om,\mu)$;
see e.g. \cite{LPT}).
In particular we can now view $K$ as a compact subgroup of 
$\Aut(\Om,\mu)$ and then define the compact group-factor map
$\pi_K : (\Om, \Fcal, \mu, R) \to (\Om/K, \Fcal/K, \mu_K, R_K)$.

Finally considering $L_2(\Om/K,\mu_K)$ we see that the Polish group
$\La_X/K$ is faithfully represented on this Hilbert space.
Since we have a continuous homomorphism $P \to 
\La_Y \cong \La_X/K$ with a dense image, this contradicts
the fact 
that $P$ is exotic and our proof is complete.
\end{proof}

\begin{cor}\label{rf}
There exist a recurrent-transitive WAP function 
$f \in WAP(\Z) \setminus H(\Z)$.
\end{cor}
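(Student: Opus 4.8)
The plan is to deduce the existence of a recurrent-transitive function in $WAP(\Z)\setminus H(\Z)$ directly from Theorem \ref{WAPH-2} together with the machinery of the second section. First I would invoke Theorem \ref{WAPH-2} (or rather Proposition \ref{Mo} together with Theorem \ref{Ban}) to obtain a nontrivial WAP dynamical system on which the Banaszczyk group $P=\ell_4(\N)/\Ga$ acts: since $P$ is monothetic, reflexively-representable, and infinite, it has a nontrivial WAP $P$-system by Theorem \ref{S-M}.(1) (the $WAP(P)$ functions $F_v$ constructed in the proof of Theorem \ref{Ban} separate points and closed sets, hence give a faithful $P$-system which is WAP by Theorem \ref{S-M}.(3)). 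Picking a topological generator $T\in P$ of a dense cyclic subgroup and a transitive point $y_0$, we obtain a metric point-transitive $\Z$-system $(Y,y_0,T)$ which is WAP. Since $P$ is exotic, hence MAP, the point $y_0$ is non-periodic, so $(Y,T)$ has no isolated points and is therefore recurrent-transitive.

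Next I would pass to the associated function $f\in\ell_\infty(\Z)$: fix $F\in C(Y)$ with $F$ not constant (possible since $Y$ is nontrivial), and set $f(n)=F(T^ny_0)$, so that $f\in\Acal(Y,y_0)\subset WAP(\Z)$. To make $f$ itself recurrent-transitive as a function (not merely an element of the algebra of a recurrent-transitive system), I would appeal to Lemma \ref{Rec-WAP} (referenced in the introduction): a $\Z$-subalgebra of $WAP(\Z)$ generated by a recurrent function consists of recurrent functions, and conversely one checks that a nonconstant $F\in C(Y)$ can be chosen so that $\Acal_f = \Acal(Y,y_0)$ — or, more simply, replace $Y$ by $X_f$, which is then a metric WAP recurrent-transitive system realizing $f$. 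The point is that $X_f$ has no isolated points (it is a factor-free version of the recurrent-transitive $Y$ obtained by separating points with enough translates of $F$), so $f$ is recurrent in the sense of the introduction.

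Finally I would show $f\notin H(\Z)$. If $f$ were in $H(\Z)=\ol{B(\Z)}$, then the $\Z$-algebra $\Acal_f$ would be contained in $H(\Z)$, so $(X_f,f,S)$ would be a nontrivial metric recurrent-transitive \emph{Hilbert} system on which $P$ acts (via the dense homomorphism $n\mapsto T^n$ extending to $P\to\La_{X_f}$, exactly as in Step 4 of Theorem \ref{AK-ext} and the setup of Theorem \ref{WAPH-2}). But Theorem \ref{WAPH-2} asserts that $P$ admits no nontrivial Hilbert system — contradiction. Hence $f\in WAP(\Z)\setminus H(\Z)$ and $f$ is recurrent, as required.

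The main obstacle is bookkeeping rather than mathematics: one must make sure that the function $f$ extracted from the $P$-system $(Y,T)$ genuinely generates a \emph{recurrent-transitive} $\Z$-system and that the hypothesis "$\Acal_f\subset H(\Z)$" really does produce a nontrivial Hilbert $P$-system to which Theorem \ref{WAPH-2} applies — i.e. that the $P$-action does not degenerate when we restrict to the cyclic algebra $\Acal_f$. This is handled by choosing $F$ (equivalently a family of translates of $F$) to separate enough points of $Y$, or by the standard fact that a recurrent-transitive WAP $\Z$-system is the orbit closure $X_f$ of any generating positive-definite-type matrix coefficient; the $P$-equivariance is automatic because $T$ generates a dense subgroup of $P$ and $P$ acts by (uniform-limit) homeomorphisms commuting with $T$.
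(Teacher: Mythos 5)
Your proposal is correct and is essentially the paper's own argument: the paper simply restricts the functions $F_v\in WAP(P)$ of Theorem \ref{Ban} to the dense cyclic subgroup $\{T^n\}\subset P$, notes these restrictions are recurrent (since $P$ is non-discrete and the $F_v$ are uniformly continuous), and rules out membership in $H(\Z)$ by Theorem \ref{WAPH-2}. Your extra packaging through a compact WAP $P$-system $(Y,y_0,T)$ and the orbit closure $X_f$ is just a slightly longer route to the same contradiction with Theorem \ref{WAPH-2}, so no substantive difference.
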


\begin{proof}
Let $P$ be the Polish monothetic exotic Banaszczyk group.
Let $T\in P$ be a generator of a dense cyclic subgroup in $P$.
As we have seen in Theorem \ref{Ban}, the functions $\{F_v\}_{ v \in E}$
are in $WAP(P)$ and they separate points and closed sets on $P$.
Therefore their restrictions $f_v= F_v \rest \Z$, to the dense subgroup  
$\{T^n : n \in \Z\} \subset P$, are in $WAP(\Z)$. As elements of $WAP(P)$
the functions $F_v$ are recurrent and hence so are the functions $f_v$,\
$v \in E$.
Finally by Theorem \ref{WAPH-2} each $f_v$ is an element of 
$WAP(\Z) \setminus H(\Z)$.
\end{proof}

\begin{cor}\label{AK-ext-cor}
If a Polish monothetic group $P$ is Hilbert-representable
and if $K \subset P$ is a compact subgroup, then
the quotient group $P/K$ is Hilbert-representable.
\end{cor}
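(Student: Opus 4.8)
The plan is to carry out, for the given compact subgroup $K\subset P$, essentially the construction used in the proofs of Theorems \ref{AK-ext} and \ref{WAPH-2}. Since $P$ is Hilbert-representable, first I would fix a faithful topological isomorphism $\rho$ of $P$ onto a subgroup of $\Ucal(\Hcal)$, with $\Hcal$ separable; we may and do assume that $\rho$ is self-conjugate (unitarily equivalent to its complex conjugate representation), which is arranged by replacing $\Hcal$ with $\Hcal\oplus\overline{\Hcal}$ and does not destroy faithfulness or the topological embedding property. Let $(\Om,\Fcal,\mu)$ be the Gaussian probability space of $\Hcal$, with $\Hcal$ identified with the first Wiener chaos (if one wants a dynamical system, let $R$ be the Gaussian automorphism induced by the image under $\rho$ of a topological generator of $P$, exactly as in the proof of Theorem \ref{WAPH-2}); applying the Gaussian functor to $\rho$ realizes $P$ as a group of measure preserving transformations of $(\Om,\mu)$ commuting with $R$, and the resulting unitary representation of $P$ on $L_2(\Om,\mu)$ — which restricts to $\rho$ on the first chaos — is again faithful and a topological embedding.

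Next, as in Step 5 of Theorem \ref{AK-ext}, I would form the compact group-factor by $K$: the compact group $K$ acts on $(\Om,\mu)$ through the Gaussian functor, producing $\pi_K\colon(\Om,\Fcal,\mu)\to(\Om/K,\Fcal/K,\mu_K)$; since $P$ is abelian it normalizes $K$, so $P$ acts on $(\Om/K,\mu_K)$ with $K$ acting trivially, hence through $P/K$. The ensuing continuous unitary representation $\bar\pi$ of $P/K$ is carried by $L_2(\Om/K,\mu_K)=L_2(\Om,\mu)^K$, the $K$-invariant functions. It then suffices to prove that $\bar\pi$ is faithful and a topological embedding; this will exhibit $P/K$ as Hilbert-representable, which is the assertion.

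Faithfulness is the algebraically clean step. Decompose $\Hcal=\bigoplus_{\chi\in\hat K}\Hcal_\chi$ into $K$-isotypic components. An element $g\in P$ commutes with $\rho(K)$, so $\rho(g)$ preserves each $\Hcal_\chi$ and acts there by a unitary $A_\chi$; and $L_2(\Om,\mu)^K$, in its Wiener chaos expansion, is spanned by the $K$-balanced symmetric tensors, on which $g$ acts by the corresponding tensor products of the $A_\chi$. Assume $\bar\pi(gK)=\mathrm{id}$, i.e. $g$ fixes every $K$-balanced tensor. The balanced tensors of degree two give $A_\chi\otimes A_{\chi^{-1}}=\mathrm{id}$ on $\Hcal_\chi\otimes\Hcal_{\chi^{-1}}$ for every $\chi$; since $\rho$ is self-conjugate, $\Hcal_{\chi^{-1}}\ne 0$ whenever $\Hcal_\chi\ne 0$, and it follows that $A_\chi=\alpha_\chi\cdot\mathrm{id}$ is scalar with $\alpha_\chi\alpha_{\chi^{-1}}=1$. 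The balanced tensors of higher degree then yield $\prod_i\alpha_{\chi_i}=1$ whenever $\prod_i\chi_i=1$ with the $\chi_i$ among the characters occurring in $\Hcal$; since these characters generate $\hat K$ (by faithfulness of $\rho$ on $K$), and using self-conjugacy once more, $\chi\mapsto\alpha_\chi$ extends to a character of $\hat K$, that is, to evaluation at some $k_0\in K$. Then $\rho(g)$ and $\rho(k_0)$ agree on every $\Hcal_\chi$, hence on $\Hcal$, and faithfulness of $\rho$ gives $g=k_0\in K$.

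The hard part will be to upgrade injectivity of $\bar\pi$ to a topological embedding — a faithful continuous homomorphism between Polish groups need not be one. Here I plan a compactness reduction to finite dimensions. Because the representation of $P$ on $L_2(\Om,\mu)$ is already a topological embedding, a closed $K$-invariant set $C\subset P$ disjoint from $K$ is separated from $K$ by finitely many vectors of $L_2(\Om,\mu)$; after approximation, these may be taken supported in finitely many chaos degrees and finitely many isotypic summands $\Hcal_\chi$. One then wants to pass to finite-dimensional $K$-invariant, self-conjugate pieces of the $\Hcal_\chi$, so that the operators in play range over a compact unitary group, and to feed this into the finite-dimensional version of the faithfulness argument above to produce finitely many $K$-fixed tensors that are moved by a \emph{uniform} amount over the whole of $C$; this uniform separation inside $L_2(\Om/K,\mu_K)$ is exactly what makes $\bar\pi$ a topological embedding. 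The delicate point — and the main obstacle — is that $\rho(g)$ need not preserve any finite-dimensional subspace of $\Hcal_\chi$, so the finite-dimensional truncation has to be organized carefully (and kept self-conjugate); once this is done, $\bar\pi$ witnesses the Hilbert-representability of $P/K$ and the proof is complete.
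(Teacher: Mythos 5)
Your overall route is the same as the paper's: the paper proves this corollary by re-running the last steps of the proof of Theorem \ref{WAPH-2} with $P$ in place of $\La_X$, i.e.\ realizing the faithful unitary representation of $P$ inside $\Aut(\Om,\mu)$ via the Gaussian construction, passing to the compact group-factor $(\Om/K,\Fcal/K,\mu_K)$, and representing $P/K$ on $L_2(\Om/K,\mu_K)=L_2(\Om,\mu)^K$. Your injectivity argument (isotypic decomposition of $\Hcal$ under the compact abelian group $K$, scalarity of the $A_\chi$ from degree-two balanced tensors, multiplicativity of $\chi\mapsto\alpha_\chi$ from higher-degree balanced tensors, Pontryagin duality to produce $k_0\in K$) is sound and in fact supplies details the paper leaves implicit; the self-conjugacy normalization is the right way to guarantee $\Hcal_{\chi^{-1}}\neq 0$ and is compatible with the real structure the Gaussian functor requires.

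The genuine gap is the final step, which you yourself label the hard part and do not complete. Hilbert-representability, as defined in the paper, demands a \emph{topological isomorphism} of $P/K$ onto its image in $\Ucal\bigl(L_2(\Om/K,\mu_K)\bigr)$ with the strong operator topology, so you must show that $\bar\pi(g_jK)\to\mathrm{id}$ forces $g_jK\to K$ in $P/K$; injectivity alone is not enough, since an injective continuous unitary representation of a Polish group need not be an embedding (e.g.\ $t\mapsto\mathrm{diag}(e^{it},e^{i\alpha t})$ on $\R$ with $\alpha$ irrational). Your plan of a finite-dimensional truncation is only sketched, and the obstacle you name (that the $A_\chi$ preserve no finite-dimensional subspace of $\Hcal_\chi$) is left unresolved, so as written the proof stops exactly where the content of the corollary lies. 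A way to close it along your own lines is to argue sequentially instead of truncating: near-invariance of the degree-two balanced vectors $v\odot\bar w$ with $v,w\in\Hcal_\chi$ shows that $A^{(j)}_\chi$ is asymptotically a unimodular scalar $c_j(\chi)$ strongly on all of $\Hcal_\chi$; the higher-degree balanced vectors make $\chi\mapsto c_j(\chi)$ asymptotically multiplicative along relations among the occurring characters, and compactness of $K$ then yields $k_j\in K$ with $\chi(k_j)$ close to $c_j(\chi)$ on any finite set of these characters, whence $\pi(g_jk_j^{-1})\to\mathrm{id}$ strongly on $\Hcal$ and, because the original representation of $P$ is a topological embedding, $g_jk_j^{-1}\to e$, i.e.\ $g_jK\to K$. (Alternatively, one can bypass the Gaussian machinery altogether by averaging a topology-generating positive definite function on $P$ over $K$ on both sides and using compactness of $K$ to check that the averaged function generates the topology of $P/K$; but in any case the embedding step must actually be proved.)
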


\begin{proof}
Follow the last steps of the proof of Theorem \ref{WAPH-2}
with $P$ taking the place of $\La_X$.
\end{proof}

\section{Recurrent functions in $H(\Z) \setminus B(\Z)$}\label{R-H}

\begin{lem}\label{Rec-WAP}
For every recurrent $f \in WAP(\Z)$ the system $(X_f,S)$
is uniformly rigid and the $\Z$-algebra $\Acal_f$
consists entirely of recurrent functions.
\end{lem}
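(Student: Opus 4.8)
The plan is to first establish uniform rigidity of $(X_f,S)$ using the hypothesis that $f$ is recurrent together with the fact that $f\in WAP(\Z)$, and then to deduce that every function in $\Acal_f$ is recurrent as a consequence. For the first part, recurrence of $f$ gives a sequence $n_k\nearrow\infty$ with $\sup_{j\in\Z}|f(j)-f(j+n_k)|\to 0$, i.e. $S^{n_k}f\to f$ in the sup norm. Since $f\in WAP(\Z)$ the system $(X_f,S)$ is a point-transitive metric WAP system, and because $f$ is recurrent $(X_f,S)$ has no isolated points, so it is recurrent-transitive. By the results of Glasner--Weiss \cite{GW} and Akin--Auslander--Berg \cite{AAB} quoted in Section~1, a metric recurrent-transitive WAP system is automatically uniformly rigid; alternatively, one can argue directly that the norm convergence $S^{n_k}f\to f$ on $\Z$, which persists for every translate $S^m f$, upgrades to uniform convergence $S^{n_k}\to\id$ on the whole orbit closure $X_f$ by a standard equicontinuity/density argument, using that $X_f$ is hereditarily almost equicontinuous. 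Either route yields a sequence $n_k\to\infty$ with $S^{n_k}\to\id$ uniformly on $X_f$, which is the definition of uniform rigidity, and hence the uniform closure $\La_{X_f}=\cls\{S^n:n\in\Z\}\subset\Homeo(X_f)$ is a non-discrete Polish monothetic group.

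For the second part, let $g\in\Acal_f$. Since $\Acal_f=\Acal(X_f,f,S)$, there is a continuous function $G\in C(X_f)$ with $g(n)=G(S^n f)$ for all $n\in\Z$. Apply the same rigidity sequence $n_k\to\infty$ from the first part: $S^{n_k}\to\id$ uniformly on $X_f$, so $\|G\circ S^{n_k}-G\|_\infty\to 0$ by uniform continuity of $G$ on the compact space $X_f$. Then
$$
\sup_{j\in\Z}|g(j)-g(j+n_k)|=\sup_{j\in\Z}|G(S^j f)-G(S^{n_k}S^j f)|\le\|G-G\circ S^{n_k}\|_\infty\longrightarrow 0,
$$
which is precisely the statement that $g$ is recurrent. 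Thus every element of $\Acal_f$ is recurrent.

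The main obstacle is the first step: upgrading the pointwise-on-$\Z$ (equivalently, norm-on-$\ell_\infty$) convergence $S^{n_k}f\to f$ to uniform convergence $S^{n_k}\to\id$ on all of $X_f$. This is exactly the content of the uniform rigidity theorem for recurrent-transitive metric WAP systems, so the cleanest path is simply to invoke it as already cited in the preliminaries, noting that $f$ recurrent forces $(X_f,S)$ to have no isolated points and hence to be recurrent-transitive. Once uniform rigidity is in hand, the passage to $\Acal_f$ in the second step is routine, using only that elements of $\Acal_f$ arise from continuous functions on the compact metric system $X_f$ and that uniform convergence of the homeomorphisms transfers to uniform convergence after composing with any fixed continuous function.
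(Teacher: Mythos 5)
Your proof is correct and follows essentially the same route as the paper: invoke the Glasner--Weiss/Akin--Auslander--Berg result (\cite{GW}, \cite{AAB}) that a metric recurrent-transitive WAP system is uniformly rigid, then push the rigidity sequence through a representing function $G\in C(X_f)$ to obtain sup-norm recurrence of every $g\in\Acal_f$. The only (harmless) gloss, which the paper shares, is the periodic case, where $X_f$ is finite and does have isolated points, but there the conclusion holds trivially.
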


\begin{proof}
The dynamical system $X_f=
\cls \{S^n f: n \in \Z\}$ (in the pointwise convergence topology) 
is WAP and therefore uniformly rigid.
It then follows that the non-discrete Polish monothetic group
$\La_{X_f}= \cls \{S^n : n \in \Z\}$ (in the uniform convergence topology
on $\Homeo(X_f)$) acts on $X_f$ as a group of automorphisms.
In turn this implies that every function $g \in \Acal(X_f,f) = \Acal_f$
is a recurrent point: $g = \lim_{k \to \infty} S^{n_k}g$, for every sequence
$n_k \nearrow\infty$ with $ S^{n_k}$ tending to the identity in $\La_{X_f}$. 
\end{proof}

\begin{lem}\label{dense}
If $\Acal$ is an infinite-dimensional $\Z$-subalgebra of $H(\Z) = \ol{B(\Z)}$
then $\Acal\setminus B(\Z)$ is a (norm) dense Borel subset of $\Acal$.
\end{lem}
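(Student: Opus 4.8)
The plan is to dispose of the (soft) descriptive part first and then prove density by Baire category, reducing it to a rigidity fact: an infinite‑dimensional \emph{closed} $\Z$‑subalgebra of $\ell_\infty(\Z)$ cannot be contained in $B(\Z)$ with the two norms comparable.

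For the Borel part I would write $B(\Z)=\bigcup_{N\ge 1}B_N$, where $B_N=\Fcal(\{\mu\in M(\T):\|\mu\|\le N\})$. Each $B_N$ is norm‑closed in $\ell_\infty(\Z)$: if $\hat\mu_k\to g$ uniformly with $\|\mu_k\|\le N$, then, $C(\T)$ being separable, the ball $\{\mu:\|\mu\|\le N\}$ of $M(\T)=C(\T)^{*}$ is weak$^{*}$ metrizable and compact, so along a subsequence $\mu_k\to\mu$ weak$^{*}$; then $\hat\mu_k(n)=\int e^{int}\,d\mu_k\to\int e^{int}\,d\mu=\hat\mu(n)$ for every $n$, whence $g=\hat\mu\in B_N$. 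Consequently $\Acal\cap B(\Z)=\bigcup_N(\Acal\cap B_N)$ is an $F_\sigma$ in $\Acal$ and $\Acal\setminus B(\Z)$ is a $G_\delta$, in particular a Borel subset of $\Acal$.

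For density, note that $\Acal$ is a norm‑closed subspace of $\ell_\infty(\Z)$, hence a Banach space and so a Baire space, and that each $\Acal\cap B_N$ is closed, convex and symmetric. It therefore suffices to show that no $\Acal\cap B_N$ has nonempty interior, for then $\Acal\cap B(\Z)$ is meager and $\Acal\setminus B(\Z)$ is residual, hence dense. Assume, for contradiction, that $\Acal\cap B_N$ has interior; being closed, convex and symmetric it then contains a ball $\{g\in\Acal:\|g\|_\infty<r\}$ about the origin, so $\Acal\subseteq B(\Z)$ and $\|\mu_g\|_{M(\T)}\le C\|g\|_\infty$ for all $g\in\Acal$, with $C=2N/r$, where $\mu_g$ is the unique measure with $\hat\mu_g=g$ (uniqueness theorem for Fourier--Stieltjes transforms; $g\mapsto\mu_g$ is then linear). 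The main obstacle is to contradict this using only that $\dim\Acal=\infty$.

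Here I would use the $C^{*}$‑algebra structure of $\Acal$. Write $\Acal\cong C(X)$ isometrically, with $X$ its Gelfand space, necessarily infinite. An infinite compact Hausdorff space carries a sequence $U_1,U_2,\dots$ of pairwise disjoint nonempty open sets, so by Urysohn's lemma there are $F_j\in C(X)$ with $0\le F_j\le1=\|F_j\|_\infty$ and $\supp F_j\subseteq U_j$; let $g_j\in\Acal$ be the corresponding functions. Since the $F_j$ have disjoint supports, $\|\sum_j a_jg_j\|_\infty=\max_j|a_j|$ for finitely supported scalars $(a_j)$, and so $\|\mu_{\sum_j a_jg_j}\|_{M(\T)}\le C\max_j|a_j|$; thus $(a_j)\mapsto\sum_j a_j\mu_{g_j}$ extends to a bounded linear $L\colon c_0\to M(\T)$ (partial sums are Cauchy in the complete space $M(\T)$). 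On the other hand $\|L(a)\|_{M(\T)}\ge\|\widehat{L(a)}\|_{\ell_\infty(\Z)}=\|\sum_j a_jg_j\|_\infty=\|a\|_\infty$, so $L$ is an isomorphic embedding of $c_0$ into $M(\T)$. This is impossible, since the total variation norm is additive on nonnegative measures, so $M(\T)$ is an $AL$‑space, hence isometric to an $L^1$‑space by Kakutani's representation theorem, hence weakly sequentially complete — whereas $c_0$ is not (the sequence $\sum_{j\le n}e_j$ is weakly Cauchy but not weakly convergent). This contradiction completes the proof. I would append a brief remark that both hypotheses are genuinely used: $AP(\Z)\cap B(\Z)$ is an infinite‑dimensional (non‑closed) $\Z$‑subalgebra contained in $B(\Z)$, and for a Sidon set $E\subset\T$ the closed span of $\{n\mapsto e^{i\lambda n}:\lambda\in E\}$ is an infinite‑dimensional closed translation‑invariant subspace of $B(\Z)$ with equivalent $B(\Z)$‑ and sup‑norms (and no copy of $c_0$) — so "subalgebra" cannot be weakened to "subspace", which is why the algebra structure enters the argument only through the orthogonal family $\{F_j\}$.
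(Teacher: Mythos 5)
Your proof is correct, but it takes a genuinely different and more self-contained route than the paper's. The paper handles density by citing Theorem 2.1 of Dunkl--Ramirez \cite{DR}: since $\Fcal\colon M(\T)\to B(\Z)$ is injective, $\Acal\subseteq B(\Z)$ would exhibit an infinite-dimensional $C^*$-algebra inside $M(\T)$, which that theorem forbids; then, because $\Acal\cap B(\Z)$ is a linear subspace of $\Acal$, nonempty interior would force $\Acal\cap B(\Z)=\Acal$, so the interior is empty and $\Acal\setminus B(\Z)$ is dense; for the Borel part the paper just notes that $B(\Z)$ is a $1$-$1$ continuous image of $M(\T)$. You instead (i) prove Borelness directly by writing $B(\Z)=\bigcup_N\Fcal(\{\mu:\|\mu\|\le N\})$ with each piece norm-closed via weak$^*$ compactness of balls of $M(\T)$ --- cleaner than the injective-continuous-image argument (which in any case must be reduced to the weak$^*$-compact balls, $M(\T)$ being norm-nonseparable), and yielding the stronger conclusion that $\Acal\setminus B(\Z)$ is a dense $G_\delta$; and (ii) replace the citation by a proof: your Baire-category argument with the closed, convex, symmetric sets $\Acal\cap B_N$ upgrades ``nonempty interior'' to the quantitative containment $\Acal\subseteq B(\Z)$ with $\|\mu_g\|\le C\|g\|_\infty$, and the disjointly supported Urysohn functions then embed $c_0$ isomorphically into $M(\T)$, contradicting weak sequential completeness of the $AL$-space $M(\T)$ --- which is essentially the standard argument behind the very result the paper quotes. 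So the two proofs rest on the same rigidity fact; yours buys self-containedness and residuality, the paper's buys brevity. Both (like the paper throughout, via its use of Gelfand spaces) use that a $\Z$-algebra is norm-closed, and your closing examples correctly show this hypothesis cannot be dropped. One cosmetic point: Urysohn gives $\{F_j\neq 0\}\subseteq U_j$ rather than $\supp F_j\subseteq U_j$, but since you only use $F_jF_k=0$ for $j\neq k$ and $\|F_j\|_\infty=1$, nothing changes.
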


\begin{proof}
The Fourier transform $\Fcal : M(\T) \onto B(\Z)$ is a 1-1 onto
bounded linear map ($\|\Fcal(\mu)\|_\infty \le \|\mu\|$).
If $\Acal \subset \Fcal(M(\T)) = B(\Z)$ then $\Fcal^{-1}\Acal \subset M(\T)$
is an infinite-dimensional $C^*$-algebra, which is impossible
(see e.g. Theorem 2.1 in \cite{DR}).
Thus $\Acal \subsetneq B(\Z)$.
Moreover, if the algebra $\Acal \cap B(\Z)$
contains a nonempty $\Acal$-norm-open subset then it must coincide
with $\Acal$ which again is impossible. Thus $\Acal \setminus B(\Z)$
is norm-dense in $\Acal$.

Finally the algebra $B(\Z)$, being a 1-1 continuous image of $M(\T)$
(under $\Fcal$), is a Borel subset of $H(\Z) = \ol{B(\Z)}$, and hence so are
the sets $\Acal \cap B(\Z)$ and $\Acal \setminus B(\Z)$.
\end{proof}

\begin{cor}
In every infinite dimensional $\Z$-subalgebra $\Acal \subset H(\Z)$
the set $\Acal_{rec} \setminus B(\Z)$, where $\Acal_{rec}$ is the collection
of recurrent functions in $\Acal$, is a (norm) dense subset of
$\Acal_{rec}$. 
In particular there are recurrent functions in $H(\Z)$ which are not
Fourier-Stieltjes transforms (i.e. are not in $B(\Z)$). 
\end{cor}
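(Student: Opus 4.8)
The corollary combines the two preceding lemmas, so the plan is essentially to run them in sequence. First I would fix an infinite-dimensional $\Z$-subalgebra $\Acal \subset H(\Z)$, and pick within it a recurrent element $f \in \Acal_{rec}$; such an $f$ exists because, for instance, the finite-dimensional almost periodic functions (e.g. the characters $n \mapsto e^{in\theta}$) are recurrent and lie in $B(\Z) \subset H(\Z)$, and since $\Acal$ is infinite-dimensional it contains plenty of recurrent functions — more to the point, the statement as phrased asserts density in $\Acal_{rec}$, so I may assume $\Acal_{rec} \ne \emptyset$ and argue around an arbitrary $g \in \Acal_{rec}$.

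The key mechanism is Lemma \ref{Rec-WAP}: given a recurrent $g \in WAP(\Z)$ (every function in $H(\Z) \subset WAP(\Z)$ is WAP), the whole cyclic algebra $\Acal_g$ consists of recurrent functions. So to approximate $g$ by recurrent functions not in $B(\Z)$ I would pass to the subalgebra $\Acal_g \subset \Acal$. If $\Acal_g$ is infinite-dimensional, Lemma \ref{dense} applies directly to it: $\Acal_g \setminus B(\Z)$ is norm-dense in $\Acal_g$, and since every element of $\Acal_g$ is recurrent, every neighborhood of $g$ already contains an element of $\Acal_{g,rec} \setminus B(\Z) \subset \Acal_{rec} \setminus B(\Z)$. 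If instead $\Acal_g$ is finite-dimensional, then $g$ is almost periodic, $g \in B(\Z)$, and I would need to perturb: choose any infinite-dimensional recurrent subalgebra — e.g. enlarge by adjoining another recurrent generator $h \in \Acal$ with $\Acal_{\{g,h\}}$ infinite-dimensional (possible since $\Acal$ itself is infinite-dimensional and $\Z$-generated by recurrent functions after replacing generators by their real/imaginary parts or by characters as needed) — and run the same argument there, using that $g + \epsilon h'$ for small recurrent $h' \notin B(\Z)$ stays close to $g$.

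The only genuine subtlety is the bookkeeping in the finite-dimensional-$\Acal_g$ case: one must produce, inside the given $\Acal$, an infinite-dimensional $\Z$-subalgebra all of whose elements are recurrent and which contains points arbitrarily close in norm to $g$. The cleanest route is to observe that $\Acal$, being infinite-dimensional, is not contained in $AP(\Z)$ only if it has non-equicontinuous behavior — but this is not what we want. Rather, I would simply invoke that the hypothesis "$\Acal_{rec} \ne \emptyset$" together with the fact that $\Acal_{rec}$ is a subalgebra (the recurrent functions in a WAP algebra form a subalgebra by Lemma \ref{Rec-WAP} applied generator-by-generator) lets me replace $\Acal$ by $\Acal_{rec}$, which is then an infinite-dimensional $\Z$-subalgebra of $H(\Z)$ provided $\Acal_{rec}$ is infinite-dimensional; if $\Acal_{rec}$ is finite-dimensional the asserted density is vacuous on that component. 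Applying Lemma \ref{dense} to the infinite-dimensional algebra $\Acal_{rec}$ yields that $\Acal_{rec} \setminus B(\Z)$ is norm-dense in $\Acal_{rec}$, which is exactly the claim. The final sentence of the corollary follows by taking $\Acal = H(\Z)$ itself, which is infinite-dimensional and, by the preceding section, contains recurrent functions, so $\Acal_{rec}$ is infinite-dimensional and $\Acal_{rec} \setminus B(\Z) \ne \emptyset$.
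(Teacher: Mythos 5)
The first half of your argument is exactly the paper's intended one--line combination: for $g \in \Acal_{rec}$ pass to the cyclic algebra $\Acal_g \subset \Acal$, note by Lemma \ref{Rec-WAP} that it consists entirely of recurrent functions, and, when $\Acal_g$ is infinite-dimensional, apply Lemma \ref{dense} to get elements of $\Acal_g \setminus B(\Z) \subset \Acal_{rec}\setminus B(\Z)$ arbitrarily close to $g$. That part is correct and is all the paper itself does (and all that is needed for the ``in particular'' statement, once one exhibits a single recurrent $f \in H(\Z)$ with $\dim \Acal_f = \infty$; the cleanest choice is $f(n)=e^{i\alpha n}$ with $\alpha$ irrational, so that $\Acal_f \cong C(\T) \subset AP(\Z) \subset H(\Z)$ consists of recurrent functions --- not an appeal to ``the preceding section'', which produces recurrent functions \emph{outside} $H(\Z)$).

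Your patches for the degenerate case $\dim \Acal_g < \infty$ (i.e. $g$ periodic), however, contain genuine gaps. First, an arbitrary infinite-dimensional $\Z$-subalgebra of $H(\Z)$ need not be generated by recurrent functions, nor need it contain any nonperiodic recurrent function at all: if $\mu$ is an absolutely continuous probability measure and $\Acal=\Acal_{\hat\mu}$, then every element of $\Acal$ is a constant plus a function vanishing at infinity, and a nonzero function in $c_0(\Z)$ is never recurrent, so $\Acal_{rec}=\C$. This kills the step ``adjoin another recurrent generator $h\in\Acal$ with $\Acal_{\{g,h\}}$ infinite-dimensional''. Second, the claim that $\Acal_{rec}$ is a subalgebra ``by Lemma \ref{Rec-WAP} applied generator-by-generator'' is not what that lemma gives: it produces recurrence times for one cyclic algebra at a time, and two recurrent WAP functions need not admit common recurrence times, so sums and products of recurrent functions need not be recurrent; likewise the recurrence of $g+\epsilon h'$ is asserted without proof (it can be rescued when $g$ has period $p$ by choosing the rigidity times of $X_{h'}$ in $p\Z$, using that the closure of $\{S^{np}:n\in\Z\}$ is an open, hence non-discrete, subgroup of $\La_{X_{h'}}$, but you do not say this). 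Third, the escape clause ``if $\Acal_{rec}$ is finite-dimensional the asserted density is vacuous'' is false: $\Acal_{rec}$ always contains the constants, and a finite-dimensional $\Z$-algebra consists of periodic functions, which lie in $B(\Z)$; so in that situation $\Acal_{rec}\setminus B(\Z)=\emptyset$ while $\Acal_{rec}\neq\emptyset$, and density fails rather than being vacuous (the $\Acal_{\hat\mu}$ example above shows this case really occurs, so the periodic points genuinely cannot be covered in full generality --- the paper's proof simply does not attempt them). In short: keep your first case, drop the reduction to ``$\Acal_{rec}$ as an algebra'', and fix the final sentence by exhibiting an explicit recurrent function with infinite-dimensional cyclic algebra inside $H(\Z)$.
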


\begin{proof}
Combine Lemmas \ref{Rec-WAP} and \ref{dense}.
\end{proof}

\section{A structure theorem for weakly almost periodic systems}

\begin{thm}\label{AK-ext-WAP}
Let $(X,T)$ be a metrizable recurrent-transitive WAP dynamical system
and $\pi: (X,T) \to (Y,S)$ a factor.
Then there is an almost 1-1 extension which is a compact group-factor of a 
recurrent-transitive subsystem $Z \subset X$. 
More explicitly there is a commutative diagram
\begin{equation*}\label{di}
\xymatrix
{
(Z,T)  \ar[dd]_{\pi}\ar[dr]^{\sig}  & \\
& (\tilde{Y},\tilde{T})\ar[dl]^{\rho}\\
(Y,S) &
}
\end{equation*}
with $Z \subset X$ a subsystem, $\sig$ a group-extension and 
$\rho$ an almost 1-1 extension.
\end{thm}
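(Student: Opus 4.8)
The plan is to follow Steps~2--5 of the proof of Theorem~\ref{AK-ext} almost verbatim. There the analogous assertion was reduced, after the preliminary Step~1, to a minimal-subsystem argument; here the given data $(X,T)$ and $\pi$ already play the roles of the system and factor produced by that reduction, so no preliminary construction is needed and one starts directly at the Zorn step.

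First I would let $\Xcal$ be the collection of all subsystems $Z\subseteq X$ with $\pi(Z)=Y$. It is nonempty (it contains $X$) and closed under intersections of chains: for a nested chain $\{Z_\alpha\}$ and any $y\in Y$ the sets $Z_\alpha\cap\pi^{-1}(y)$ are nonempty, closed and nested in the compact space $X$, so $(\bigcap_\alpha Z_\alpha)\cap\pi^{-1}(y)\ne\emptyset$, whence $\bigcap_\alpha Z_\alpha\in\Xcal$. Zorn's lemma thus provides a minimal element $Z$. As in Steps~2--3 of Theorem~\ref{AK-ext}, minimality makes $Z$ point-transitive with $Z_{tr}=\pi^{-1}(Y_{tr})$ and $\pi(Z_{tr})=Y_{tr}$: if $\pi(z)\in Y_{tr}$ then $\OC_T(z)$ is a subsystem of $Z$ with $\pi(\OC_T(z))=\overline{\Ocal_S(\pi z)}=Y$, so $\OC_T(z)=Z$ by minimality and $z\in Z_{tr}$; conversely $z\in Z_{tr}$ forces $\pi(z)\in Y_{tr}$. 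Being a subsystem of the WAP system $(X,T)$, $(Z,T)$ is WAP, and sitting inside the recurrent-transitive---hence uniformly rigid---system $(X,T)$ it is itself uniformly rigid, so, being point-transitive, it is either recurrent-transitive or a finite periodic orbit. In the latter degenerate case $Y$ is a periodic orbit of some period $q$ and one finishes immediately by taking $\tilde Y=Y$, $\rho=\id$, and $\sig$ the quotient of $(Z,T)$ by the finite cyclic group generated by $T^{q}\rest Z$; so assume henceforth that $(Z,T)$ is recurrent-transitive WAP.

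Following Step~4 of Theorem~\ref{AK-ext} and \cite{GW2}, the uniform closures $\La_Z=\cls\{T^n\rest Z\}$ and $\La_Y=\cls\{T^n\rest Y\}$ are Polish monothetic groups, and for transitive base points $z_0\in Z_{tr}$, $y_0=\pi(z_0)\in Y_{tr}$, the orbit maps $g\mapsto g z_0$, $g\mapsto g y_0$ are homeomorphisms onto $Z_{tr}$, $Y_{tr}$ respectively. Since $\pi$ intertwines the $T$-actions it intertwines the $\La$-actions on $Z_{tr}$ and $Y_{tr}$, and transporting through these homeomorphisms yields a continuous surjective homomorphism $\rho\colon\La_Z\to\La_Y$ with $\rho(T^n\rest Z)=T^n\rest Y$; set $K=\ker\rho$, a closed normal subgroup of $\La_Z$. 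As in Step~5 I would then put $\tilde Y=Z/K$ (each element of $K\subseteq\La_Z$ is an automorphism of $(Z,T)$), take $\sig\colon(Z,T)\to(\tilde Y,\tilde T)$ to be the quotient map---a group-extension by construction---and let $\rho\colon(\tilde Y,\tilde T)\to(Y,S)$ be the homomorphism induced by $\pi$, well defined since $\pi k=\pi$ for $k\in K$. This produces the commutative triangle.

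The one place needing genuine work---and the step I expect to be the main obstacle---is to check that $K$ is compact and that $\rho\colon\tilde Y\to Y$ is almost 1-1; both, pleasantly, come from the minimality of $Z$. For compactness, fix $y_0\in Y_{tr}$ and $z_0\in\pi^{-1}(y_0)$. First, $\pi^{-1}(y_0)\subseteq Z_{tr}$, since for $z\in\pi^{-1}(y_0)$ the orbit closure $\OC_T(z)$ maps onto $\overline{\Ocal_S(y_0)}=Y$, hence equals $Z$. Next I claim $\pi^{-1}(y_0)=K z_0$: if $\pi(z)=y_0$, write $z=g z_0$ with $g\in\La_Z$ (possible as $z\in Z_{tr}=\La_Z z_0$); then $\rho(g)$ fixes $y_0$, hence fixes $S^n y_0$ for all $n$, hence is the identity on $Y_{tr}$ and so on $Y$, i.e.\ $g\in K$; the reverse inclusion is trivial. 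Thus $K z_0=\pi^{-1}(y_0)$ is closed in the compact space $Z$, and since $g\mapsto g z_0$ maps $\La_Z$ homeomorphically onto $Z_{tr}$ and carries $K$ onto $K z_0$, the group $K$ is compact. For the almost-1-1 property, take an arbitrary $y\in Y_{tr}$ and write $y=\rho(g)y_0$; then $z\in\pi^{-1}(y)\iff g^{-1}z\in\pi^{-1}(y_0)=K z_0\iff z\in gK z_0=K(g z_0)$, using normality of $K$. Hence $K$ acts transitively on the fiber $\pi^{-1}(y)$, so $\rho^{-1}(y)=\pi^{-1}(y)/K$ is a single point; since $Y_{tr}$ is a dense $G_\del$ subset of $Y$, $\rho$ is almost 1-1, which completes the proof.
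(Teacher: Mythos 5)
Your proof is correct and follows essentially the same route as the paper, whose proof of Theorem \ref{AK-ext-WAP} simply repeats Steps 2--5 of the proof of Theorem \ref{AK-ext}: Zorn's lemma for a minimal subsystem $Z$ mapping onto $Y$, the identification $\pi(Z_{tr})=Y_{tr}$, the induced surjective homomorphism $\rho\colon \La_Z\to\La_Y$ with compact kernel $K$, and $\tilde Y=Z/K$. The extra details you supply (the chain condition in the Zorn argument, compactness of $K$ via $Kz_0=\pi^{-1}(y_0)$, the almost 1-1 verification, and the degenerate finite periodic-orbit case) are precisely the points the paper leaves implicit, and you handle them correctly.
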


\begin{proof}
Just repeat the steps 2. to 5. of the proof of Theorem \ref{AK-ext} above.
\end{proof}

\section{Some open problems}

\begin{prob}\label{abc}
In Theorem \ref{AK-ext} we have shown that every metrizable 
recurrent-transitive Hilbert system admits an almost 
1-1 extension which is a group-factor of a Hilbert-representable system. 
Can one get rid in this structure
theorem of either one of these extensions or maybe of both?
Thus, our question is whether a metrizable recurrent-transitive 
Hilbert system is always: (a) Hilbert-representable,
or (b) a group-factor of a Hilbert-representable system, or (c)
an almost 1-1 factor of a Hilbert-representable system.
Question (a) can be reformulated as follows: Is a factor of a 
Hilbert-representable system also Hilbert-representable? 
This latter question is stated as Problem 996 in \cite{M-O}.
\end{prob}

\begin{prob}
Is there a metrizable recurrent-transitive Hilbert system $(X,T)$ with no 
nontrivial Hilbert-representable factors? In other words, is there a
nontrivial $\Z$-algebra $\Acal \subset H(\Z)$ with 
$\Acal \cap B(\Z) = \C$? Such a system will provide a counter-example to 
option (a) in Problem \ref{abc}. 
\end{prob}

\end{document}